\documentclass[12pt]{amsart} 

\usepackage[margin=1in]{geometry}
\usepackage{graphicx} % Required for inserting images
\usepackage{amsmath}
\usepackage{amsfonts, amsthm}
\usepackage{amssymb}
\usepackage{enumerate}
\usepackage{tikz}
\usepackage{tikz-cd}
\usepackage[font=small,labelfont=bf]{caption}
\usepackage{hyperref} % Required for hyperlinks
\usepackage{cleveref}

\newtheorem{theorem}{Theorem}[section]
\newtheorem{conjecture}[theorem]{Conjecture}
\newtheorem{proposition}[theorem]{Proposition}
\newtheorem{lemma}[theorem]{Lemma}

\theoremstyle{remark}
\newtheorem{remark}{Remark}[theorem]
\numberwithin{equation}{section}

\newcommand{\Z}{\mathbb{Z}}
\newcommand{\Q}{\mathbb{Q}}

\newcommand{\OO}{\mathcal{O}}
\newcommand{\C}{\mathbb{C}}

\newcommand{\p}{\mathfrak{p}}
\newcommand{\q}{\mathfrak{q}}
\newcommand{\f}{\mathfrak{f}}
\newcommand{\oo}{\mathfrak{o}}
\newcommand{\fP}{\mathfrak{P}}
\newcommand{\PP}{\mathbb{P}}
\newcommand{\A}{\mathbb{A}}

\newcommand{\Gal}{\mathrm{Gal}}
\newcommand{\Frob}{\mathrm{Frob}}
\newcommand{\Disc}{\mathrm{Disc}}
\newcommand{\Spec}{\mathrm{Spec}}
\newcommand{\Li}{\mathrm{Li}}
\newcommand{\1}{\textbf{1}}

\renewcommand{\epsilon}{\varepsilon} % Better epsilon
\renewcommand{\leq}{\leqslant}
\renewcommand{\le}{\leqslant}
\renewcommand{\geq}{\geqslant}
\renewcommand{\ge}{\geqslant}

\title{Extreme values of Artin $L$-functions at $s=1$}

\author{Asif Zaman}
\author{Haonan Zhao}

\address{Department of Mathematics, University of Toronto,  Toronto ON, Canada}
\email{asif.zaman@utoronto.ca}
\email{zhaohaonan1228@gmail.com}

\begin{document}

\begin{abstract}
    Let $K/\Q$ be a Galois extension of number fields with Galois group $G$. Let $\chi$ be any character on $G$ and $\kappa(\chi)$ be the leading coefficient of the Laurent series of the Artin $L$-function $L(s,\chi)$ at $s=1$. Cho, Lemke Oliver, and Zaman (2026) recently established a conditional bound for $\kappa(\chi)$ assuming GRH for the Dedekind zeta function of $K$. Up to implied constants, we show that the extremal values of these bounds are attained for an infinite family of extensions $K/\Q$ and all irreducible faithful characters $\chi$, provided $G$ is a regular Galois group over $\Q$. All characters $\chi$ are permitted without restriction when  $G$ is abelian, $A_n, S_n$, or any sporadic group. Our method removes the assumptions of GRH and Artin's holomorphy conjecture from an argument of Duke (2003) by following the strategy of Lemke Oliver, Thorner, and Zaman (2024) on the existence of extremal class numbers for number fields of arbitrary degree. Key ingredients include progress by D\'{e}bes (2017) towards Malle's conjecture, and an Artin $L$-function large sieve by Lemke Oliver and Smith (2024).  
\end{abstract}

\maketitle

\section{Introduction}

Let $K/k$ be a Galois extension of number fields with Galois group $G$. Let $\rho:G \to \mathrm{GL}(V)$ be any finite-dimensional complex representation with character $\chi$, and denote $L(s,\chi)$ to be the corresponding Artin $L$-function. By the classical work of Artin and Hecke, it is known that the function $(s-1)^{\langle \chi,\1_G\rangle}L(s,\chi)$ is holomorphic and non-vanishing at $s=1$, where $\langle \cdot,\cdot \rangle$ is the inner product on $G$ and $\1_G$ is the trivial character on $G$. Define the quantity 
$$
\kappa(\chi) := \lim_{s \to 1} (s-1)^{\langle \chi,\1_G\rangle}L(s,\chi)
$$
to be the leading (non-zero) coefficient of the Laurent series of  $L(s,\chi)$ at $s=1$. When $\chi$ is trivial, this coefficient specializes to the residue of the Dedekind zeta function of $k$, made famous by Dirichlet's class number formula. This residue has been extensively studied with results on its extremal size, upper and lower bounds, and distribution in certain families, e.g., \cite{Duke03,Daileda-2006,Cho-2014, ChoKim-2017,LOTZ24,ChoKim-2026}.  

When $\chi$ is arbitrary, the quantity $\kappa(\chi)$ has been studied to a lesser extent.  Cho--Lemke Oliver--Zaman \cite{cho_effective_2025}   recently gave both unconditional and conditional bounds for  $\kappa(\chi)$ in terms of the discriminant $D_k = |\mathrm{Disc}(k)|$ of $k$, the analytic conductor $q(\chi)$, the values of $\chi$, and the values of the character $\widetilde{\chi}$ induced from the pullback of $\chi$ to $\operatorname{Gal}(\widetilde K/\mathbb{Q})$, where $\widetilde K$ is the Galois closure of $K$ over $\mathbb{Q}$. Assuming the generalized Riemann hypothesis (GRH) for the Dedekind zeta function $\zeta_K$,  they showed in  \cite[Proposition 2.1]{cho_effective_2025} that  
\begin{equation}
\label{eqn:uppbound}
    |\kappa(\chi)| \ll_{[k:\Q],|G|,\chi(1)} (\log \log 3 D_k)^{\widetilde{\chi}(1)-\chi(1)} (\log \log 3q(\chi))^{\chi(1)-\langle \chi,1\rangle}
\end{equation}
and
\begin{equation}
\label{eqn:lowbound}
    |\kappa(\chi)| \gg_{[k:\Q],|G|,\chi(1)} (\log \log 3 D_k)^{{\mu}(\widetilde{\chi})-\mu(\chi)} (\log \log 3q(\chi))^{\mu(\chi)-\langle \chi,1\rangle},
\end{equation}
where $\langle \, \cdot , \cdot \, \rangle $ is the normalized inner product on $G$, and 
\begin{equation}\label{def:mu}
\mu(\chi) = \min_{g \in G} \Re (\chi(g)) \quad \text{and} \quad \chi(1) =  \max_{g \in G} \Re (\chi(g)).
\end{equation}
By assuming Artin's holomorphy conjecture (AHC) in addition to GRH, the same bounds follow from earlier work of Duke \cite[Proposition 5]{Duke03} when $k= \Q$. 

Our goal is to investigate whether the bounds \eqref{eqn:uppbound} and \eqref{eqn:lowbound} are tight in an infinite family of fields with the same Galois group over $\Q$. This leads to a natural conjecture.

\begin{conjecture} \label[conjecture]{conj:main}
    Fix a non-trivial finite group $G$ and a subset $\mathcal{S} \subseteq \mathrm{Char}(G)$ of its characters. For any $g \in G$, there exist infinitely many normal extensions  $K/\Q$ with $\Gal(K/\Q) \simeq G$ such that 
    \[
    |\kappa(\chi)|  \asymp_{|G|,\chi(1)} (\log \log 3q(\chi))^{\Re \chi(g)-\langle\chi,1 \rangle}
    \]
    for all characters $\chi \in \mathcal{S}$. 
\end{conjecture}
By \eqref{def:mu}, a verification of this conjecture implies \eqref{eqn:uppbound} and \eqref{eqn:lowbound} are tight for the given finite group $G$ and subset $\mathcal{S} \subseteq \mathrm{Char}(G)$ of its characters. The choice $G = \Z/2\Z$ and $\mathcal{S} = \{\chi_\text{sgn}\}$ corresponds to quadratic extensions over $\Q$ and their Dirichlet $L$-functions. This special case of Conjecture \ref{conj:main} was verified by Montgomery--Weinberger \cite{MW77}, matching the classical GRH bounds of Littlewood \cite{Lit28}, and substantial distributional information has been extracted by Granville--Soundararajan \cite{granville_distribution_2003} and Lamzouri \cite{Lamzouri-2010,Lamzouri-2015}. 

For general groups $G$ and subsets $\mathcal{S} \subseteq \mathrm{Char}(G)$, this conjecture has significant barriers.  First and foremost, it requires progress towards the inverse Galois problem. This can be solved when $G$ is a regular Galois group over $\Q$, i.e.  $G$ is isomorphic to the Galois group of a regular extension $E$ over the rational function field $\Q(t)$ as defined by Serre \cite[Page 35]{serre_topics_2016}.  By assuming AHC and GRH, an argument of Duke \cite[Propositions 3 and 5] {Duke03}  can be adapted to conditionally verify Conjecture \ref{conj:main} when $G$ is a regular Galois group over $\Q$.  This broad class of groups is widely studied (see, e.g., \cite{serre_topics_2016}, \cite{MalleMatzat}, \cite{wittenberg25}, \cite{Huang26}).   

    The assumptions of AHC and GRH avoid deep technical barriers: Artin $L$-functions are not yet known to be holomorphic in general, and the coefficient $\kappa(\chi)$ may not be well-behaved unless the zeros of the Artin $L$-function are far from $s=1$. Our main theorem removes these assumptions provided the characters are irreducible and faithful. 

\begin{theorem}
    \label{thm:main1} Conjecture \ref{conj:main} holds if $G$ is a regular Galois group over $\mathbb{Q}$, and $\mathcal{S} \subset \mathrm{Char}(G)$ is the subset of faithful irreducible characters of $G$. 
\end{theorem}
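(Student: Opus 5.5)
We follow Duke's strategy, with GRH and AHC replaced by the approach of Lemke Oliver--Thorner--Zaman. The plan has three stages: construct a large family of $G$-extensions with prescribed small Frobenius, show that almost all of them have no Siegel-type zeros near $s=1$, and then approximate $\kappa(\chi)$ by a short Euler product.

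\emph{Construction of the family.} Since $G$ is a regular Galois group over $\Q$, there is a regular Galois extension $E/\Q(t)$ with group $G$. Hilbert irreducibility, in the quantitative form of D\'ebes' progress towards Malle's conjecture, gives $\gg X^{c}$ specializations $t_0 \in \Q$ of height $\le X$ whose specialized fields $K_{t_0}/\Q$ have group $G$ and discriminant $D_{K_{t_0}} \le X^{C}$ (and the fields are pairwise distinct up to bounded multiplicity). Fix $g\in G$ and set $y = c'\log X$. By imposing congruence conditions on $t_0$ modulo $\prod_{p\le y} p$, we force $\Frob_p$ to lie in the conjugacy class of $g$ for every unramified $p\le y$, and we force the small ramified primes to behave in a controlled way. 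By Chebotarev applied to the function field $E$ modulo $p$ (Lang--Weil), each such local condition has positive density, with a uniform lower bound $\delta_p \ge c_G>0$ for large $p$. Thus the product of the local densities is $\ge e^{-O(y)} = X^{-O(c')}$. Taking $c'$ small, we retain $\gg X^{c/2}$ fields.

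\emph{Excluding bad zeros.} For a faithful irreducible $\chi$, $L(s,\chi)$ is entire (and nonvanishing at $s=1$) except possibly in a region near $s=1$. Indeed, $\zeta_K(s)=\prod_\psi L(s,\psi)^{\psi(1)}$, and faithfulness ensures that distinct fields in the family give distinct $L(s,\chi)$, i.e.\ primitive twists that do not collapse. We apply the Lemke Oliver--Smith large sieve for Artin $L$-functions to obtain a zero-density estimate over the family. It shows that all but $O(X^{c/4})$ fields have $L(s,\chi)$, for every faithful irreducible $\chi$, zero-free in the region $\Re s \ge 1-\epsilon$, $|\Im s|\le (\log X)^{A}$. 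The same argument must also control the other factors of $\zeta_K$, which amounts to controlling zeros of the Dedekind zeta functions of subfields (those correspond to non-faithful characters). Alternatively, those factors can be avoided by working with $\chi$ alone through $L(s,\chi)L(s,\bar\chi)$ and the Aramata--Brauer positivity; to avoid needing AHC we would use Brauer induction locally in the zero-free box. I expect this step to be the main obstacle. The difficulty is a genuinely usable zero-free region for non-holomorphic Artin $L$-functions that is uniform in a thin family. It requires checking that the specialized fields meet the hypotheses of the large sieve (bounded multiplicity of each field, conductor $q(\chi)\asymp D_K^{\chi(1)/|G|}$-type control), and handling the potential poles through the holomorphy of $L(s,\chi)^{\chi(1)}$-type products inside the zero-free box.

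\emph{The Euler product.} For a surviving field, the zero-free rectangle together with a standard contour argument (Perron's formula with the $\log L$ Dirichlet series, shifted into the box) gives $\log \kappa(\chi) = \sum_{p\le (\log X)^{B}} \chi(\Frob_p)/p + O(1)$. This uses $\log q(\chi)\asymp \log X$. We then split the sum into two ranges. For $p\le y$ the Frobenius is the class of $g$, contributing $\Re\chi(g)\log\log y + O(1)$ in absolute value; the imaginary part only affects the argument. The range $y<p\le(\log X)^B$ has $\log\log$-length $O(1)$, so it contributes $O_{|G|,\chi(1)}(1)$ trivially, since $|\chi|\le\chi(1)$. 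For nontrivial irreducible $\chi$ we have $\langle\chi,1\rangle=0$, and $\log\log y = \log\log\log X + O(1)\asymp \log\log\log q(\chi)$. This yields $|\kappa(\chi)|\asymp (\log\log 3q(\chi))^{\Re\chi(g)-\langle\chi,1\rangle}$ simultaneously for all faithful irreducible $\chi$. Letting $X\to\infty$ produces infinitely many such fields, completing the proof of \Cref{thm:main1}.
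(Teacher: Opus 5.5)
Your architecture matches the paper's. You specialize a regular $G$-cover, force $[\Frob_p]=[g]$ for $\lambda\le p\le y$ via congruences on $t_0$ (\Cref{prop:serre}), use \Cref{thm:cohen} and \Cref{thm:debes} to get many distinct $G$-fields with $D_K\le X^{C}$, invoke Lemke Oliver--Smith, and finish with Mertens. The gap is your middle stage, which you leave unresolved, and the route you sketch there is both unnecessary and problematic.

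You do not need a zero-free box, holomorphy of $L(s,\chi)$, or a Perron/contour shift. The Lemke Oliver--Smith input, in the form \cite[Lemma 7.6 and the proof of Corollary 1.16]{oliver_faithful_2024} (the paper's \Cref{prop:density}), already gives the conclusion without AHC: outside $O(Q^{\epsilon})$ exceptional fields with $D_K\le Q$, one has $\log L(1,\chi)=\sum_{N(\mathfrak{p})\le(\log D_K)^A}\log L_{\mathfrak{p}}(1,\chi)+O(1)$ simultaneously for every faithful irreducible $\chi$. Your suggestion that the argument ``must also control the other factors of $\zeta_K$'' would drag in the non-faithful characters, i.e.\ the subfields $K^N$. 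An exceptional subfield can then sit under many members of your family, so you would have to arrange that the $K^N$ vary enough to survive the exceptional set. That is exactly the extra work the paper does for \Cref{thm:main2} (iterated D\'ebes, under $\sum_{N\ne G}|N|<|G|$). Restricting \Cref{thm:main1} to faithful characters is precisely what lets the paper avoid it.

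Two smaller repairs are also needed.

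\emph{Order of the D\'ebes step.} D\'ebes does not give bounded multiplicity, and you cannot multiply a count of distinct fields by the density of your congruence conditions. Instead, restrict $t_0$ to the prescribed residue classes first. This leaves $\gg X^{1-O(c')}$ integers, minus Cohen's $O(X^{1/2}\log X)$ thin-set exceptions. Then apply \Cref{thm:debes}, which holds for any such set $\mathcal{H}(T)$. You obtain $\gg X^{1-1/|G|-O(c')-\epsilon}$ distinct fields, far more than the $O(X^{C\epsilon'})$ exceptions.

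\emph{Lower bound on $q(\chi)$.} Your last step asserts $\log q(\chi)\asymp\log X$, but you only have the upper bound. The lower bound is needed because your Frobenius window is tied to $X$ rather than to $q(\chi)$. Either of the following fixes it:
\begin{enumerate}
\item Discard the $\ll X^{O(\eta)}$ fields with $D_K\le X^{\eta}$, and use $\log q(\chi)\asymp\log D_K$ for faithful $\chi$ \cite[Lemma 4.2]{cho_effective_2025}.
\item Do as the paper does: truncate the Euler product at $(\log q(\chi))^{\delta}\le y$. Then every prime $\lambda\le p\le(\log q(\chi))^{\delta}$ has prescribed Frobenius, and only $D_K\le X^{C}$ is used.
\end{enumerate}
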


We broadly follow the strategy of Lemke Oliver--Thorner--Zaman \cite{LOTZ24} for the existence of large class numbers for number fields of any degree. First, we invoke a quantitative inverse Galois theorem of D\'{e}bes \cite{De17} to construct a large family of number fields with the same Galois group $G$ by specializing a regular $G$-cover over $\PP^1_\Q$. With the Serre specialization theorem, we are further able to prescribe Frobenius elements at small primes. Second, we replace GRH and AHC with new large sieve results due to Lemke Oliver--Smith \cite{oliver_faithful_2024} for Artin $L$-functions (in some cases like $G=A_n$ or $S_n$, the zero density result in \cite{LOTZ24} would suffice).  This input ensures that 100\% of the  Artin $L$-functions attached to faithful irreducible representations in our family can be approximated by a short Euler product.

By iteratively applying the above method to all covers corresponding to all normal subgroups, our second main theorem allows all characters (not necessarily irreducible or faithful) provided $G$ has few proper normal subgroups, such as $A_n, S_n,$ and all sporadic groups.

\begin{theorem}
    \label{thm:main2} Conjecture \ref{conj:main} holds if $G$ is a regular Galois group over $\Q$, $\mathcal{S} = \mathrm{Char}(G)$, and 
    \[
    \sum_{N \triangleleft G, N\ne G} |N|<|G|.
    \]
\end{theorem}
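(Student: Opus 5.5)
We reduce Theorem \ref{thm:main2} to the method behind Theorem \ref{thm:main1}, applied to every quotient of $G$ simultaneously. Any character $\chi$ of $G$ decomposes as $\chi = \sum_{\psi} m_\psi \psi$ over irreducible $\psi$, so $L(s,\chi)=\prod_\psi L(s,\psi)^{m_\psi}$ and $\kappa(\chi)=\prod_\psi \kappa(\psi)^{m_\psi}$. Each irreducible $\psi$ has kernel $N_\psi \triangleleft G$ and factors through a faithful irreducible character $\bar\psi$ of $G/N_\psi$. Moreover $L(s,\psi)=L(s,\bar\psi)$ for the subfield $K^{N_\psi}$, with $\Gal(K^{N_\psi}/\Q)\simeq G/N_\psi$. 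Also $\Re\chi(g) = \sum m_\psi \Re\psi(g)$, $\langle\chi,1\rangle=m_{\1}$, and $q(\chi)=\prod q(\psi)^{m_\psi}$. Conductors of the $\psi$ appearing for a given $K$ satisfy $\log\log q(\psi)\asymp \log\log D_K$ in our family, since only primes dividing $D_K$ can ramify and the family will have controlled ramification. Hence it suffices to produce infinitely many $K$ with $\Gal(K/\Q)\simeq G$ such that, for every nontrivial irreducible $\psi$,
\[
|\kappa(\psi)| \asymp (\log\log 3q(\psi))^{\Re\psi(g)},
\]
with the same $g$ (and $\kappa(\1)=1$).

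Construction. Fix a regular $G$-cover of $\PP^1_\Q$. Following the proof of Theorem \ref{thm:main1}, use D\'ebes' quantitative Hilbert irreducibility to get $\gg T^{c}$ specializations $t\le T$ with group $G$ and discriminant $\le T^{C}$. Using Serre's specialization theorem (Frobenius at unramified primes is determined by $t$ modulo $p$), restrict $t$ to congruence classes forcing $\Frob_p(K_t/\Q)$ to lie in the conjugacy class of $g$ for all $p\le y$, where $y \asymp \log T$ (really a small multiple, chosen so the modulus $\prod_{p\le y}p$ is $T^{\epsilon}$). Each quotient $K_t^{N}$ then has Frobenius the image of $g$ in $G/N$ at these primes. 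A short Euler product $\prod_{p\le y}\det(1-\bar\psi(\Frob_p)p^{-1})^{-1}$ has size $\asymp (\log y)^{\Re\psi(g)}$ by Mertens, matching the target since $\log y\asymp \log\log q(\psi)$.

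Approximation. For each proper normal $N$ and faithful irreducible $\bar\psi$ of $G/N$, the family $\{K_t^N\}$ is a family of $G/N$-extensions. The Lemke Oliver--Smith large sieve for faithful characters, as used in Theorem \ref{thm:main1}, shows that all but $o(\#\text{family})$ members have $L(s,\bar\psi)$ zero-free in a region near $1$ sufficient to approximate $\kappa(\psi)$ by the short product up to a constant factor. The main obstacle is that the sieve counts exceptions among \emph{distinct fields} $K_t^N$, while we count parameters $t$ of $G$-extensions. Many $t$ may give the same subfield $K^N$. The number of $G$-fields sharing a given $G/N$-subfield is controlled by a count of $N$-extensions of it, which is roughly of relative size $|N|$ times an exponent. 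The exceptional set for $N$ then loses a factor tied to $|N|$ relative to the full family, whose size is tied to $|G|$ via D\'ebes' exponent. I expect the hypothesis $\sum_{N\ne G}|N|<|G|$ to be exactly what makes the union over all proper $N$ of the exceptional sets a vanishing proportion of the family: the exceptional $t$ for each $N$ number $\ll T^{c|N|/|G|+o(1)}$-type quantities summing to $o(T^{c})$. Carefully establishing this fiber count, likely via an upper bound for specializations with a fixed intermediate field in terms of the degree of the quotient cover, is the hardest step. Once every $N$ is handled, a positive proportion of $t$ is good for all $\psi$ at once, giving infinitely many $K$, and multiplying over irreducible constituents proves the conjecture for all of $\mathrm{Char}(G)$.
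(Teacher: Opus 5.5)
\textbf{A different and more efficient route, with one step to repair.} Your reduction to nontrivial irreducible $\psi$, the descent to faithful $\bar\psi$ on $G/N_\psi$, the Serre-specialization congruence conditions, and the Lemke Oliver--Smith sieve on each quotient family are what the paper does. The difference is how you turn the sieve's count of exceptional \emph{fields} into a count of exceptional \emph{parameters}.

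The paper iterates D\'ebes' theorem. It thins to one parameter per distinct $E_t$, then one per distinct $E_t^N$ for each proper $N$ in turn, so that $t\mapsto E_t^N$ is injective for all $N$ at once. Each pass costs a factor $T^{1/[G:N]}=T^{|N|/|G|}$, and $\sum_{N\ne G}|N|<|G|$ is exactly what keeps $T^{1-\sum_{N\ne G}|N|/|G|-\epsilon}$ a positive power.

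You instead keep all $\gg T^{1-\epsilon}$ parameters and bound fibers. The fiber bound you call the hardest step is immediate from D\'ebes' theorem as quoted. Apply it to the quotient cover $V^N\to\PP^1$, which is regular with group $G/N$, with $\mathcal H(T)=\{t\le T: E_t^N\cong F\}$. This gives $\#\mathcal H(T)\le \beta+T^{|N|/|G|}(\log T)^{\gamma}$ uniformly in $F$.

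Consequently your expectation about the hypothesis is mistaken. Over the finitely many proper $N$, the exceptional parameter sets have total size $\ll T^{\max_{N\ne G}|N|/|G|+O(\epsilon)}\le T^{1/2+O(\epsilon)}$. This is governed by the maximum of the exponents, not their sum. So your argument never uses $\sum_{N\ne G}|N|<|G|$. Carried out, it gives the statement for every regular Galois group, in line with the paper's closing remark that the condition should be superfluous. Compare the exceptional parameters against the $T^{1-\epsilon}$ parameters, not against D\'ebes' count of distinct fields. Apply D\'ebes to $V$ itself only at the end, to extract $\gg T^{1-1/|G|-\epsilon}$ distinct $G$-fields.

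The step that fails as written is the conductor comparison. ``Only primes dividing $D_K$ ramify'' gives only $q(\psi)\le D_K^{O(1)}$. A D\'ebes-type family has no controlled ramification, and the Frobenius constraint does not obviously force large subfield discriminants either. So the lower bound $\log q(\psi)\gg \log T$ for every nontrivial irreducible $\psi$ is unjustified. You need it for $\log y\asymp\log\log q(\psi)$, and for $\log\log q(\psi)\asymp\log\log q(\chi)$ when multiplying constituents; a subfield $E_t^N$ of small discriminant breaks the latter.

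The paper repairs this by discarding parameters for which some $D_{E_t^N}<T^{\eta}$. In your framework this removes at most $T^{O(\eta)}\cdot T^{1/2+o(1)}$ parameters, by the count of small-discriminant $G/N$-fields times the same fiber bound. After that, $\log q(\psi)\asymp \log D_{E_t^{N_\psi}}\asymp\log T$ by Lemma 4.2 of Cho--Lemke Oliver--Zaman, and the rest of your argument goes through.
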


\noindent
The above theorem notably excludes many Abelian groups, which we separately treat. 
\begin{theorem}
    \label{thm:main3} Conjecture \ref{conj:main} holds if $G$ is Abelian and $\mathcal{S} = \mathrm{Char}(G)$. 
\end{theorem}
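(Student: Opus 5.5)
For abelian $G$, every character is a sum of one-dimensional characters, and the Artin $L$-functions are products of Hecke (Dirichlet) $L$-functions. The plan is to work with Dirichlet characters directly. This removes both the AHC issue and the need for an inverse Galois input. Write $G \simeq \prod_{i} \Z/n_i\Z$. For a character $\chi = \sum_{\psi} m_\psi \psi$ with $m_\psi \ge 0$, we have $\kappa(\chi) = \kappa(\1)^{m_{\1}}\prod_{\psi \neq \1} L(1,\psi)^{m_\psi}$. Over $\Q$ the trivial character contributes $\zeta$'s residue, which is $1$. So it suffices to control $L(1,\psi)$ simultaneously for all nontrivial $\psi \in \widehat G$ in our family.

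First, the family. Choose distinct primes $p_1,\dots,p_r$ with $p_i \equiv 1 \pmod{n_i}$, and let $K$ be the compositum of the degree-$n_i$ subfields of $\Q(\zeta_{p_i})$. Then $\Gal(K/\Q)\simeq G$, and the characters of $G$ correspond to products $\psi=\prod_i \psi_i^{a_i}$, where $\psi_i$ has order $n_i$ and conductor $p_i$. Given the target element $g=(g_1,\dots,g_r)$ and a parameter $y$, we impose that each prime $\ell\le y$ is inert/split in $K$ with $\Frob_\ell = g$. This means prescribing $\ell$ modulo $p_i$ as an $n_i$-th-power class, namely a condition on the residue of $\ell$ in $(\Z/p_i\Z)^\times/(\Z/p_i\Z)^{\times n_i}$. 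By quadratic-reciprocity-type arguments this is awkward. It is simpler to count via characters: we want $\psi_i(\ell)=\zeta_{n_i}^{g_i}$ for all $\ell\le y$. The number of primes $p\le X$, $p\equiv 1 \pmod{n_i}$, satisfying this is $\gg X/(n_i^{\pi(y)}\log X)$ by Chebotarev in $\Q(\zeta_{n_i}, \ell^{1/n_i}:\ell\le y)$, uniformly for $y \approx c\log X\log\log X$. In fact these conditions are independent Kummer conditions, so the density is exactly $n_i^{-\pi(y)}$ among primes $\equiv 1 \pmod{n_i}$ (modulo the usual care when $n_i$ is even, using $\ell$ coprime to $n_i$ and handling $\ell\mid n_i$ by fixing their splitting via conditions at $p_i$ instead). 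Since we need a set of conductors of size $X^{1-o(1)}$, effective Chebotarev (Lagarias–Odlyzko with $\log$ of the degree $\ll \pi(y)$) is used with $y = \epsilon \log X \log\log X$.

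Second, short Euler products. For each nontrivial $\psi$, with conductor $q \le X^{r}$, we need $\log L(1,\psi) = \sum_{\ell \le y}\log(1-\psi(\ell)\ell^{-1})^{-1} + o(1)$ for all but a negligible proportion of our fields. This is the main obstacle. I would use a zero-density estimate / large sieve for Dirichlet characters: the number of primitive characters of conductor $\le Q$ with a zero in $\Re s \ge 1-\delta$, $|\Im s|\le T$, is $\ll (Q^2T)^{c\delta}$ (Montgomery). Combined with the standard argument of Granville–Soundararajan, this shows that outside this exceptional set $\log L(1,\psi)$ equals its Euler product over $\ell \le (\log Q)^{A}$. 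Bridging from $(\log Q)^A$ down to $y$ requires a mean-value bound for $\sum_{y<\ell\le(\log Q)^A}\psi(\ell)/\ell$. I would use a large sieve over our family, whose characters are $\psi=\prod\psi_i^{a_i}$ with distinct prime moduli, to show that this tail is $o(1)$ for most members. The exceptional set has size $X^{O(\delta)}$ per $\psi$, which is smaller than the family size $X^{1-o(1)}$, so infinitely many good $K$ remain. Some care is needed because the family is thinned by the Frobenius conditions. The exceptional count is absolute, however, so this causes no issue.

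Finally, with $\psi(\ell)=\psi(g)$ for all $\ell \le y$, we get $\log L(1,\psi) = \psi(g)\log\log y + O(1)$. Hence $\log|L(1,\psi)| = \Re\psi(g)\log\log y+O(1)$. Summing over the decomposition of $\chi$ gives $\log|\kappa(\chi)| = (\Re\chi(g)-\langle\chi,1\rangle)\log\log y + O_{\chi(1)}(1)$. Since $\log\log y = \log\log\log q(\chi) + O(1)$ (because $\log q(\chi)\asymp_{|G|}\log X$), the stated $\asymp$ follows for all $\chi \in \mathrm{Char}(G)$ at once, with $\log\log 3q(\chi)$ to the right power.
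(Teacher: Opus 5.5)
Your architecture matches the paper's: composita of the degree-$n_i$ subfields of $\Q(\zeta_{p_i})$, with Frobenius prescribed through $n$-th power residue symbols and Kummer extensions of $\Q(\zeta_n)$. Replacing Lemke Oliver--Smith by a classical zero-density estimate for primitive Dirichlet characters is a legitimate simplification for abelian $G$.

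The genuine gap is the Chebotarev count. For $F=\Q(\zeta_n,\ell^{1/n}:\ell\le y)$ you have $n_F\approx n^{\pi(y)}$ and $\log D_F$ of size about $n_F\,y$. The unconditional Lagarias--Odlyzko theorem needs $\log X\gg n_F(\log D_F)^2$ and has error $X\exp(-c(\log X/n_F)^{1/2})$. So it depends on $n_F$ itself, not on $\log n_F$. With $y\approx\epsilon\log X\log\log X$ you get $n_F=X^{\epsilon\log n+o(1)}$, and the theorem says nothing. In fact Lagarias--Odlyzko only reaches $y$ of size roughly $\log\log X$, which would yield $(\log\log\log q)^{\Re\psi(g)}$ instead of $(\log\log q)^{\Re\psi(g)}$. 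The uniformity you assert for $y\approx c\log X\log\log X$ is a GRH-strength statement: it is what the GRH form of Lagarias--Odlyzko gives when $n_F\le X^{1/2-\epsilon}$. Unconditionally it is out of reach, because the relevant Hecke characters of $F/\Q(\zeta_n)$ have conductors up to about $\prod_{\ell\le y}\ell$, which exceeds every fixed power of $X$.

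Two ideas are missing. First, take $y$ much smaller, e.g.\ $y=\sqrt{\log X}$. Only $\log\log y=\log\log\log X+O(1)$ is ever used, so nothing is lost. This also makes your bridging step trivial, since $\sum_{y<\ell\le(\log Q)^A}1/\ell=O(1)$ by Mertens. Your proposed large sieve for that tail is therefore unnecessary, and as stated it is unjustified: the multiplicative large sieve loses badly on the sparse family of order-$n$ characters of prime conductor. Second, use a Chebotarev theorem uniform in the conductor $Q(F)=\max_\chi N(\mathfrak{f}_\chi)\le C_n(\prod_{\ell\le y}\ell)^{\varphi(n)}=X^{o(1)}$ of the abelian extension $F/\Q(\zeta_n)$, rather than in $n_F$ and $D_F$. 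The possible exceptional zero must be controlled by Stark's bound. This is how the paper obtains $\gg X^{\lambda}$ admissible primes for every $\lambda<1$, via \cite[Theorem 1.4]{TZ19}.

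Finally, make explicit why exceptional characters do not destroy the family. A character $\prod_i\psi_i^{a_i}$ depends only on those $p_i$ with $\psi_i^{a_i}\ne 1$, so one bad character of conductor $p_1$ eliminates every tuple containing $p_1$. You can fix this by counting over tuples: at most $X^{r-1+O(\delta)}$ are removed out of $X^{r-o(1)}$. Alternatively, as the paper does, choose tuples whose primes are pairwise distinct across the family in each coordinate, so that every subfield $K^N$ varies.
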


 The method of Theorem \ref{thm:main2} fails because the theorem of D\'{e}bes (\Cref{thm:debes})  cannot construct enough fields for this special case (c.f. \Cref{rk: boundNotTight}). To prove Theorem \ref{thm:main3}, we explicitly build our family of Abelian extensions through a product of cyclic extensions. For these cyclic extensions of order $m$, we use prime cyclotomic fields $\Q(\zeta_q)$ to construct them. To prescribe the Frobenius elements at a small prime $p$ in these extensions, it turns out that a sufficient condition is to prescribe the power residue symbol $\big( \frac{p}{\q}\big)_m$ where $\q$ is a prime ideal lying over $q$ in $\Q(\zeta_m)$. With no simple reciprocity theorem as in the quadratic case,  it does not appear sufficient to select a suitable arithmetic progression from Sun Tzu's remainder theorem (c.f. \Cref{rk: quadraticCRT}); instead, we carefully apply the weak reciprocity with Kummer extensions and a uniform unconditional version of the Chebotarev density theorem due to Thorner--Zaman \cite{TZ19}. 

The proofs of our three theorems establish Conjecture \ref{conj:main} in a quantitative form; that is, we exhibit a family of fields $\mathfrak{F}(X)$ with absolute discriminant $\leq X$ such that $|\mathfrak{F}(X)|$ grows as a small power of $X$ as $X \to \infty$.

\subsection*{Organization} 

\Cref{sec:HilbertIrreducibility} summarizes the main analytic and algebraic geometric ingredients we use in the proof. Using these inputs, \Cref{sec:ProofOfSec2} proves two intermediate theorems towards \Cref{thm:main1,thm:main2}. \Cref{sec:AbelianCase} develops the corresponding results when $G$ is a finite Abelian group. \Cref{sec:ProofOfMainThms} finishes the proof of \Cref{conj:main} in the cases we considered. \Cref{sec:Example} provides a concrete construction when $G=S_n$ for $n\ge 5$.

\section{Hilbert irreducibility, prescribing Frobenius and short sum estimate}
\label{sec:HilbertIrreducibility}

 The key ingredient for \Cref{thm:main1,thm:main2} is the construction of a family of Artin $L$-functions through Hilbert irreducibility whereby $\kappa(\chi)$ can be approximated by a short Euler product (or a short sum after taking the logarithm). Further, to control this Euler product, we would like to control Frobenius elements so that they intersect the desired conjugacy classes.  In this section, we establish the following two main theorems.

\begin{theorem}
    \label{thm:shortEulerFaith}
    Let $G$ be a finite regular Galois group over $\Q$. For any conjugacy classes $[c_p]$ in $G$, there are infinitely many number fields $K$ with isomorphism $\Gal(K/\Q) \xrightarrow[]{\sim}G$. For any faithful, irreducible character $\chi\ne \1_G$ on $G$, we have
    $$
    \log L(1,\chi) = \sum_{p\le (\log q(\chi))^{\delta}}  \chi(p) p^{-1} +O_{G,\chi}(1)
    $$
    for some $\delta>0$, where $\chi(p) = 1/|I_p| \sum_{j \in I_p} \chi(\Frob_p j)$. Further, there is a constant $\lambda=\lambda(G)$ such that for all $\lambda \le p \le (\log q(\chi))^{\delta}$, we have $p$ unramified and $[\Frob_p] = [c_p]$.
\end{theorem}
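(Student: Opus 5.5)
We specialize a regular $G$-cover and combine prescribed local behaviour at small primes with a large sieve for Artin $L$-functions. Fix a regular Galois extension $E/\Q(t)$ with $\Gal(E/\Q(t))\simeq G$, viewed as a branched $G$-cover $X\to\PP^1_\Q$. We invoke the quantitative Hilbert irreducibility theorem of D\'ebes \cite{De17}. For a height parameter $T$, it gives $\gg T^{\eta}$ specializations $t_0\in\Q$ of height at most $T$ such that the specialized field $K_{t_0}$ is Galois over $\Q$ with group exactly $G$. These fields have $D_{K_{t_0}}\le T^{A}$ for some $A=A(G)$, and each field arises from only $T^{o(1)}$ values $t_0$. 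The count is therefore a power of the discriminant bound $X=T^A$.

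\textbf{Step 1: prescribing Frobenius.} Let $\lambda=\lambda(G)$ exceed every bad prime of the cover, namely the primes of bad reduction of the model and of the branch locus, together with the primes below the threshold in the Lang--Weil estimate. For each prime $p$ with $\lambda\le p\le y$, the reduction of the cover mod $p$ is a smooth $G$-cover. By Lang--Weil and Chebotarev over $\mathbb{F}_p(t)$, every conjugacy class $[c_p]$ occurs as the Frobenius of some unramified fibre over a point $\bar t_p\in\mathbb{F}_p$. Serre's specialization argument then shows that every $t_0\equiv \bar t_p\pmod p$ gives $p$ unramified in $K_{t_0}$ with $[\Frob_p]=[c_p]$. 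Impose these congruences simultaneously for all $p\le y$ by the Chinese remainder theorem, restricting $t_0$ to a progression modulo $M=\prod_{p\le y}p=e^{(1+o(1))y}$. Take $y=(\log T)^{\delta'}$ with $\delta'<1$, so that $M=T^{o(1)}$. The restricted D\'ebes count then still gives $\gg T^{\eta-o(1)}$ distinct fields. This needs D\'ebes's theorem in a form valid along progressions with a modulus that is a small power of $T$; I expect his counting (lattice points in boxes) allows this.

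\textbf{Step 2: short Euler product.} For a faithful irreducible $\chi\ne\1_G$, the Lemke Oliver--Smith large sieve \cite{oliver_faithful_2024} applied to the family $\{L(s,\chi_{K})\}$ shows the following. Outside an exceptional set of size $O(T^{\eta/2})$, which is negligible against the $T^{\eta-o(1)}$ fields we built, $L(s,\chi)$ has no zeros in a region $\Re s\ge 1-\epsilon$, $|\Im s|\le (\log q)^{B}$. Faithfulness is what places the family in the setting of their theorem. For non-exceptional fields, the standard argument gives
\[
\log L(1,\chi)=\sum_{p\le (\log q(\chi))^{\delta}}\chi(p)p^{-1}+O_{G,\chi}(1).
\]
This argument is the Granville--Soundararajan/Duke approach: integrate $\log L$ against a smoothing kernel, shift the contour into the zero-free region, and use the Brun--Titchmarsh-type bound $|\sum a_\chi(n)\Lambda(n)n^{-s}|$ only through trivial bounds on coefficients, since $|\chi(p)|\le \chi(1)$. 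Prime powers and ramified primes contribute $O(1)$, since ramified primes divide $D_K$ and number $O(\log q)$ with tail controlled. Finally take $\delta\le\delta'$ so that $(\log q(\chi))^{\delta}\le y$, using $\log q(\chi)\asymp_G\log D_K\ll\log T$. Then every $p\in[\lambda,(\log q)^{\delta}]$ has the prescribed Frobenius. Letting $T\to\infty$ produces infinitely many such $K$.

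\textbf{Main obstacle.} The delicate step is the compatibility in Step 1: the Frobenius conditions must not destroy the density needed in Step 2. This requires that D\'ebes's lower bound for the number of $G$-specializations survives restriction to a progression of modulus $T^{o(1)}$. It also requires showing that distinct $t_0$ give many distinct fields, so that the exceptional set from the large sieve, whose size is counted in fields, is really smaller. I would first try a sieve/fibre-count bound of the form $\#\{t_0: K_{t_0}\simeq K\}\ll T^{\epsilon}$, adapting the argument of \cite{LOTZ24}.
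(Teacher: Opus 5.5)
Your architecture matches the paper's. You specialize a regular $G$-cover, force $[\Frob_p]=[c_p]$ for $\lambda\le p\le y$ by Serre's specialization glued via CRT into one progression of modulus $M=T^{o(1)}$, discard the Lemke Oliver--Smith exceptional fields, and truncate with Mertens using $\log q(\chi)\asymp_G\log D_K$.

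The gap is the step you flag and leave open. You never show that the restricted family contains $\gg T^{c}$ \emph{distinct} fields for a fixed $c>0$. Without this, the $O(T^{A\epsilon'})$ exceptional fields of \Cref{prop:density} could exhaust the family. Your account of D\'ebes's input is also wrong, and your proposed repair would not work. D\'ebes does not give fibres of size $T^{o(1)}$. A bound $\#\{t_0: K_{t_0}\simeq K\}\ll T^{\epsilon}$ is not available for an arbitrary regular $G$-cover. The paper gets a bound of that strength only in \Cref{sec:Example}, for specific polynomials such as $x^n-x^{n-1}-t$, using that squarefree $t_0$ divides $D_{F_{t_0}}$ together with the divisor bound.

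The missing idea is that \Cref{thm:debes} is a fibre bound valid for an \emph{arbitrary} set $\mathcal{H}(T)$ of integers $t_0\le T$ at which Hilbert irreducibility holds. It yields at least $(|\mathcal{H}(T)|-\beta)/(T^{1/|G|}(\log T)^{\gamma})$ distinct fields. So you need no version of D\'ebes ``along progressions''; you only need many Hilbert-irreducible integers inside your progression. Cohen's thin-set bound (\Cref{thm:cohen}) supplies them. The $T/M\gg T^{1-\epsilon}$ integers $t_0\le T$ with $t_0\equiv a \bmod M$ lose at most $O(T^{1/2}\log T)$ to the thin set. This gives $\gg T^{1-1/|G|-\epsilon}$ distinct $G$-fields with $D_K\ll T^{A}$. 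That is a fixed positive power since $|G|\ge 2$, and it survives removing the exceptions once $A\epsilon'<1-1/|G|-\epsilon$. Work with integral $t_0$ throughout, since Cohen's bound counts integral points.

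A smaller point concerns Step 2. Invoke Lemke Oliver--Smith in its short-Euler-product form, as the paper does, rather than as a bare zero-free region. Without Artin's holomorphy conjecture, the contour shift also requires excluding poles of $L(s,\chi)$, which a zero-free statement alone does not give.
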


If all characters are considered, by decomposing them into the irreducible components, we will prove the following.

\begin{theorem}
    \label{thm:shortEuler}
    If $G$ is further assumed to satisfy $\sum_{N \triangleleft G, N\ne G} |N|<|G|$, the infinite family can be chosen so that for any irreducible character $\chi\ne \1_G$ on $G$, we have
    $$
    \log L(1,\chi) = \sum_{p\le (\log q(\chi))^{\delta}}  \chi(p) p^{-1} +O_{G,\chi}(1)
    $$
    for some $\delta>0$, where $\chi(p) = 1/|I_p| \sum_{j \in I_p} \chi(\Frob_p j)$. Further, there is a constant $\lambda=\lambda(G)$ such that for all $\lambda \le p \le (\log q(\chi))^{\delta}$, we have $p$ unramified, $[\Frob_p] = [c_p]$, and $\log q(\chi) \asymp_G \log q(\psi)$ for any $\chi,\psi$ non-trivial irreducible characters.
\end{theorem}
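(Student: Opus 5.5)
The plan is to run the argument behind \Cref{thm:shortEulerFaith} simultaneously for every normal subgroup of $G$, and to use the hypothesis $\sum_{N \triangleleft G, N\ne G}|N|<|G|$ to show that the fields which must be discarded are negligible in number.

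\textbf{Step 1 (family).} Fix the regular $G$-cover of $\PP^1_\Q$ and specialize it at $t$ in a box. By the theorem of D\'ebes (\Cref{thm:debes}), this gives a family $\mathfrak F(X)$ of $G$-fields $K$ with $D_K\le X$. Using Serre's specialization theorem, impose congruence conditions on $t$ modulo every prime $p$ with $\lambda\le p\le (\log X)^{\delta}$ so that $[\Frob_p]=[c_p]$. The congruence modulus is $\exp(O((\log X)^\delta))=X^{o(1)}$, so these conditions cost only a factor $X^{-o(1)}$ in the count. By D\'ebes, distinct specializations give only boundedly many repeated fields, so $|\mathfrak F(X)|\gg X^{\eta}$ for some $\eta>0$ depending on $G$.

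\textbf{Step 2 (reduce to faithful characters of quotients).} Let $\chi\ne\1_G$ be irreducible and set $N=\ker\chi$, a proper normal subgroup of $G$. Then $\chi$ is the inflation of a faithful irreducible character $\bar\chi$ of $G/N$. Hence $L(s,\chi)=L(s,\bar\chi,K^N/\Q)$ and $q(\chi)=q(\bar\chi)$. It therefore suffices to prove the short Euler product estimate for faithful irreducible characters of $\Gal(K^N/\Q)\simeq G/N$, one $N$ at a time.

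\textbf{Step 3 (large sieve for each $N$).} For fixed $N$, apply the Lemke Oliver--Smith large sieve to the family of subfields $\{K^N\}$. It bounds the number of such fields $F$ of conductor up to $Y$ for which some faithful $L(s,\bar\chi,F/\Q)$ has a zero in the box $1-\epsilon\le\sigma$, $|t|\le (\log Y)^{A}$ by $Y^{\theta}$ for some small $\theta$. Each exceptional subfield $F=K^N$ lifts to at most $\#\{K\supseteq F\}$ members of $\mathfrak F(X)$, and this multiplicity is bounded in terms of the discriminants through $D_K\approx D_F^{[K:F]}$ and the ramification data. This is where the hypothesis enters. Counting the exceptional $K$ as a sum over $N$ of the exceptional $F$ times their fibers, with exponents governed by $|N|$ (the degree $[K:F]=|N|$), the total should be $\ll X^{\eta'}$ with $\eta'<\eta$ exactly when $\sum_{N\ne G}|N|<|G|$. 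I expect this to be the main obstacle: getting D\'ebes' lower exponent and the Lemke Oliver--Smith upper exponent to compare correctly. As a first attempt I would set the exponents as in \cite{LOTZ24} and use the normalization $\eta\approx 1/|G|$-type exponents.

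\textbf{Step 4 (short sums and conductors).} On the surviving fields, all nontrivial irreducible $L$-functions are zero-free near $s=1$. The standard Perron/zero-free-region argument then gives $\log L(1,\chi)=\sum_{p\le(\log q)^\delta}\chi(p)p^{-1}+O(1)$. For the last claim of the statement, use the conductor-discriminant relations: the ramified primes of the specialization are controlled by $t$, the discriminant of each $K^N$ is a power of the same product of primes, and inertia at those primes is nontrivial in every nontrivial quotient. This gives $\log q(\chi)\asymp_G\log D_K$ for every nontrivial irreducible $\chi$, and hence $\log q(\chi)\asymp_G\log q(\psi)$.
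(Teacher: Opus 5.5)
Your reduction to faithful characters of $G/N$ and your use of the Lemke Oliver--Smith sieve match the paper. Step 3, however, has a genuine gap: nothing in your argument bounds how many members of the family share a given subfield $F=K^N$. The route you suggest, bounding $\#\{K\supseteq F\}$ through $D_K\approx D_F^{[K:F]}$ and ramification, would require counting $G$-fields that contain a fixed $G/N$-field. No bound of anything like the needed strength is available for that.

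Step 1 also misreads D\'ebes: \Cref{thm:debes} does not say fields repeat boundedly often. It allows one field to come from up to about $T^{1/|G|}(\log T)^{\gamma}$ parameters $t_0\le T$, and this is exactly the quantity that matters.

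The missing idea is to apply \Cref{thm:debes} to every quotient cover $V^N\to\PP^1$, which is again regular, with group $G/N$. Applied to a single fibre, it says a given field arises as $E^N_{t_0}$ for at most $\beta+T^{|N|/|G|}(\log T)^{\gamma}$ values $t_0\le T$.

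The paper exploits this by iterative thinning:
\begin{itemize}
\item It starts from the $\gg T^{1-\epsilon}$ parameters $t_0\equiv a \bmod M$ lying outside the thin sets of all $V^N$.
\item It keeps one $t_0$ per distinct $E_{t_0}$, then one per distinct $E^N_{t_0}$ for each proper $N$ in turn. This makes $t_0\mapsto E^N_{t_0}$ injective for all $N$ simultaneously, at a multiplicative cost of $T^{|N|/|G|}$ for each $N$.
\item What survives is $\gg T^{1-\sum_{N\ne G}|N|/|G|-\epsilon}$ parameters, a positive power precisely because of the hypothesis.
\item Each of the $O(T^{\epsilon'A})$ exceptional fields for each $N$ then removes only one parameter.
\end{itemize}

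Your additive count would also work once supplied with this fibre bound. It discards at most $T^{\epsilon'A+|N|/|G|+o(1)}\le T^{1/2+\epsilon'A+o(1)}$ parameters per $N$, which is negligible against $T^{1-\epsilon}$. So your guess that the hypothesis enters ``exactly'' in that count is off. In that form it is not used at all; in the paper it enters only through the multiplicative losses of the thinning.

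Step 4 contains a false claim. For a general regular cover, primes ramified in $E_{t_0}$ need not ramify in every nontrivial subfield $E^N_{t_0}$, because the inertia groups come from the branch cycles and may lie inside $N$. For example, for $x^n-x^{n-1}-t$ and $p\parallel t_0$, inertia is generated by an $(n-1)$-cycle. This lies in $A_n$ when $n$ is even, so such $p$ are unramified in the quadratic subfield; this is why \Cref{sec:Example} switches polynomials for even $n$.

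The paper instead obtains $\log q(\chi)\asymp_G\log T$ from three facts:
\begin{itemize}
\item $\log q(\chi)=\log q(\chi^\dagger)\asymp_G\log D_{E^{N_\chi}_{t_0}}$ for the faithful descended character, by Lemma 4.2 of Cho--Lemke Oliver--Zaman.
\item $D_{E_{t_0}}\ll T^{A}$, by Mahler's bound.
\item $D_{E^N_{t_0}}\ge T^{\eta}$, enforced by discarding parameters whose subfields lie among the $\ll T^{|G|\eta}$ fields of discriminant below $T^{\eta}$. This step again needs the fibre control described above.
\end{itemize}
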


We shall assume standard properties for Artin $L$-functions (see, e.g.,  Murty--Murty \cite{Murty-Murty-1997} for a concise introduction). Now, we outline the main inputs to obtain these results.

\subsection{Hilbert irreducibility}

First, we give a quick review of Hilbert irreducibility. Let $E/\Q(t)$ be a finite Galois extension with group $G$. $E$ is regular if $E \cap \overline{\Q} = \Q$. In this setting, let $V \to \PP^1_\Q$ be the corresponding $G$-cover, that is a finite, generically \'etale morphism with $\Gal(K(V)/K(\PP^1))=G$. In the geometric language, $E$ being regular is the same as $V$ being geometrically irreducible. Concretely, we may take $V$ to be the normalization of $\PP^1$ in $E$. For a variety $V$, we denote $V(k)$ to be the $k$-points, and we call a subset $A \subset V(k)$ thin if it is as defined in \cite[Definition 3.1.1]{serre_topics_2016}.  

\begin{theorem}[Hilbert Irreducibility]
    \label{thm:hilbert}
    There is a thin set $A \subset \PP^1(\Q)$ such that for all $t_0 \notin A$, the following diagram is Cartesian,
\[\begin{tikzcd}
	{\mathrm{Spec}E_{t_0}} && V \\
	{\mathrm{Spec} \Q} && \PP^1
	\arrow[from=1-1, to=1-3]
	\arrow[from=1-1, to=2-1]
	\arrow["\ulcorner"{anchor=center, pos=0.125}, draw=none, from=1-1, to=2-3]
	\arrow[from=1-3, to=2-3]
	\arrow["t_0"', from=2-1, to=2-3]
\end{tikzcd}\]
 with $E_{t_0}/\Q$ a finite Galois extension and $$\Gal(E_{t_0}/\Q)=G.$$
\end{theorem}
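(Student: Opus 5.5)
The statement to prove is the classical Hilbert irreducibility theorem for a regular $G$-cover $V \to \PP^1_\Q$. The plan is to reduce it to the standard form in Serre \cite[\S 3.3]{serre_topics_2016}: for a Galois cover with group $G$, the set of rational points whose fibre fails to be a single Galois $G$-orbit is thin.

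First we discard a finite set $B \subset \PP^1(\Q)$ consisting of the branch points of $V \to \PP^1$ together with the points where $V$ is not étale. Finite sets are thin, so this costs nothing. For $t_0 \notin B$, the fibre $V_{t_0} = V \times_{\PP^1} \Spec \Q$ is a finite étale $\Q$-scheme of degree $|G|$ with a simply transitive action of $G$. In other words, it is a $G$-torsor over $\Q$, so it equals $\Spec$ of a product of copies of a Galois field $E_{t_0}$. The decomposition group $D_{t_0} \subseteq G$, defined up to conjugacy, is the stabiliser of one factor, and $\Gal(E_{t_0}/\Q) \simeq D_{t_0}$.

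The diagram in the statement is Cartesian with $V_{t_0} = \Spec E_{t_0}$ a field exactly when $V_{t_0}$ is connected, that is, when $D_{t_0} = G$.

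Next, for each maximal proper subgroup $H < G$ we form the quotient cover $V/H \to \PP^1$. Since $V$ is geometrically irreducible (this is the regularity of $E$), $V/H$ is also geometrically irreducible, and its degree is $[G:H] \ge 2$. If $D_{t_0} \subseteq H$ up to conjugation, then $H$ fixes a point of the torsor. Hence the fibre of $V/H$ over $t_0$ contains a $\Q$-rational point, and so $t_0 \in \pi_H\big((V/H)(\Q)\big)$.

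By definition \cite[Definition 3.1.1]{serre_topics_2016}, the image of the rational points of an irreducible cover of degree $\ge 2$ is a type-2 thin set. We take
\[
A = B \cup \bigcup_{H} \pi_H\big((V/H)(\Q)\big).
\]
This is a finite union of thin sets, hence thin, and every $t_0 \notin A$ has $D_{t_0} = G$, which is what we want.

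The main point requiring care is the identification of $D_{t_0}$ with the stabiliser of a component of the torsor $V_{t_0}$, together with the fixed-point criterion: $D_{t_0} \subseteq gHg^{-1}$ if and only if $(V/H)_{t_0}$ has a rational point. Both follow from Galois descent for étale algebras, in the equivalence between finite étale $\Q$-algebras and finite $\Gal(\overline{\Q}/\Q)$-sets. Under this equivalence the fibre corresponds to $G$ with $\Gal(\overline{\Q}/\Q)$ acting through a homomorphism with image $D_{t_0}$, and the $H$-quotient corresponds to $G/H$. Finally, Hilbert's theorem that $\PP^1(\Q)$ is not thin guarantees that the complement of $A$ is infinite, which is needed for the later application.
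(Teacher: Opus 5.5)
Your proposal is correct and is essentially the proof of Serre's Proposition 3.3.1, which is all the paper cites for this statement: you discard the branch locus, identify the specialized Galois group with the decomposition group $D_{t_0}$ of the $G$-torsor fibre, and observe that $D_{t_0}\neq G$ forces $t_0$ into $\pi_H\big((V/H)(\Q)\big)$ for some maximal $H<G$, which is a thin set of type 2. Your closing observation that $\PP^1(\Q)$ is not thin is not part of the statement, but it is what makes the result usable later in the paper.
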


\begin{proof}
    See \cite[Proposition 3.3.1]{serre_topics_2016}.
\end{proof}

Let $\mathrm{Int}_A(T)$ be the number of integral points in the thin set $A$ with absolute value less than $T$, relative to the standard model $\A^1_\Z$. We record a quantitative result of Cohen \cite{Coh81}.

\begin{theorem}[S.D. Cohen]
\label{thm:cohen}
    $\mathrm{Int}_A(T) \ll_A T^{1/2} \log T$.
\end{theorem}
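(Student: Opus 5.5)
This statement is a cited theorem of Cohen, so the natural plan is to recall how the large sieve proof goes rather than to reprove it in full. First reduce to the two kinds of basic thin sets. By definition, a thin set $A\subset\PP^1(\Q)$ is contained in a finite union of type~1 sets and type~2 sets. A type~1 set is contained in a proper Zariski-closed subset, which here means finitely many points. A type~2 set is of the form $\pi(Y(\Q))$, where $\pi\colon Y\to\PP^1$ is a dominant morphism of degree $d\ge 2$ from an irreducible curve $Y$. Type~1 pieces contribute $O_A(1)$ integral points, so it suffices to bound the number of integers $|t|\le T$ lying in $\pi(Y(\Q))$ for a single cover of degree $d\geq 2$ with $Y$ geometrically irreducible. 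If $Y$ is not geometrically irreducible, then $Y(\Q)$ lies in the intersection of its conjugate components, which is finite, so these covers also contribute $O(1)$.

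For such a cover, I would use the large sieve over the primes $p\le Q$. By the Chebotarev density theorem for function fields (equivalently, Lang--Weil applied to the Galois closure of $Y\to\PP^1$), there is a positive-density set of primes $p$ such that a positive proportion $c>0$ of residues $t_0\bmod p$ have empty fiber $\pi^{-1}(t_0)(\mathbb{F}_p)$. This positive proportion comes from the elements of the geometric monodromy group that act without fixed points; Jordan's lemma guarantees such elements exist because $d\ge 2$ and the action is transitive. If $t\in\pi(Y(\Q))$ is an integer, then for all but finitely many $p$ its reduction avoids these $\gg p$ excluded residue classes. The arithmetic large sieve then gives
\[
\#\{|t|\le T : t\in \pi(Y(\Q))\cap\Z\}\ll \frac{T+Q^2}{\sum_{p\le Q} c\,\mathbf{1}_{p\in\mathcal{P}}}\ll \frac{(T+Q^2)\log Q}{Q}.
\]
Choosing $Q=T^{1/2}$ yields $T^{1/2}\log T$.

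The main obstacle is the claim that a positive proportion of residues mod $p$ fail to lift, uniformly for $p$ in a positive-density set of primes. This requires an honest model of $Y\to\PP^1$ over $\Z[1/N]$ with good reduction, together with the Lang--Weil error term $O(\sqrt p)$. Handling the non-regular case, where the constant field of the Galois closure is larger than $\Q$, is why we restrict to primes that split completely in that constant field. Since all of this is exactly Cohen's argument in \cite{Coh81} (see also \cite[Theorem 3.4.4]{serre_topics_2016}), in the paper I would simply cite it.
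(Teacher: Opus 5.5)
Your proposal is correct and matches the paper, which gives no argument of its own and simply records the result from \cite{Coh81}. Your large-sieve sketch is the standard proof of Cohen's theorem as presented by Serre, and it is sound: you discard the type~1 pieces and the covers that are not geometrically irreducible, use Jordan's lemma in the geometric monodromy group (for primes split in the constant field) to exclude $\gg p$ residues mod $p$, and then take $Q=T^{1/2}$.
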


Additionally, the following result due to D\'ebes shows that many of the fields we obtain this way are in fact distinct.

\begin{theorem}[D\'ebes]
    \label{thm:debes}
    In the above setting, let $\mathcal{H}(T)$ be a collection of integers $t_0 \le T$ that satisfy Hilbert irreducibility and $H(t) = |\mathcal{H}(T)|$. There are $\gamma, \beta >0$, that only depend on $E$, such that the number of distinct fields we get, denoted by $N(T)$, is bounded below as
    $$
    N(T) \ge \frac{H(T)-\beta}{T^{1/|G|} (\log  T)^\gamma} .
    $$
\end{theorem}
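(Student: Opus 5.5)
This result is due to D\'ebes \cite{De17}. To prove it, I would bound the fibres of the specialization map $t_0 \mapsto E_{t_0}$ and then divide. That is,
\[
N(T) \ge \frac{H(T) - \beta}{\max_F \#\{t_0 \in \mathcal{H}(T) : E_{t_0} \simeq F\}},
\]
where $\beta$ absorbs the finitely many exceptional values of $t_0$: the branch points of $V \to \PP^1$ and the points where the chosen model of $E$ has bad reduction. It therefore suffices to show that, for a fixed Galois field $F$ with group $G$, the number of integers $t_0 \le T$ in $\mathcal{H}(T)$ with $E_{t_0} \simeq F$ is $\ll T^{1/|G|}(\log T)^\gamma$.

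\textbf{Step 1: twisted covers.} For fixed $F$, I would use the twisting lemma. Fix the isomorphism $\varphi: \Gal(F/\Q) \simeq G$ coming from the specialization. Twist $V$ by $\varphi$ to obtain a curve $\widetilde V^{\varphi}$ over $\Q$ with a cover $\widetilde V^\varphi \to \PP^1$ of degree $|G|$. Each unramified $t_0$ with $E_{t_0} \simeq F$, matching the specialization map, lifts to a $\Q$-rational point of $\widetilde V^{\varphi}$. There are only $|\mathrm{Aut}(G)|$ choices of $\varphi$. Hence it suffices to count rational points on a single twisted curve $\widetilde V^\varphi$ whose image in $\PP^1$ is an integer of height $\le T$.

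\textbf{Step 2: counting points, with uniformity in the twist.} This is the main obstacle. The pullback of the height $H(t_0)$ to the curve $\widetilde V^\varphi$ is a height of degree $|G|$. Points of bounded height on a curve of genus $\ge 1$ are very sparse. In the genus $0$ case, one counts integral points of a rational curve, and the points whose image is $\le T$ number about $T^{1/|G|}$, by a Bombieri--Pila or Heath-Brown determinant-method count of integral points on a plane model of degree $|G|$.

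The constant in that count must be uniform in $F$. Here I would use that all twists $\widetilde V^\varphi$ are forms of the same curve over $\overline{\Q}$, with the same degree and genus. Bombieri--Pila gives $\ll_{d,\epsilon} T^{1/d+\epsilon}$ uniformly in the coefficients of a degree-$d$ curve, and D\'ebes refines the $\epsilon$ loss to a power of $\log T$, which explains the factor $(\log T)^\gamma$. If I found uniformity hard to get, my first attempt would be to bound the coefficients of the twist polynomially in $D_F$, and to use that $D_F \ll T^{C}$ for specializations of height $\le T$. The dependence would then enter only through logarithms.

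\textbf{Step 3: assemble.} Multiplying the per-field bound by the $|\mathrm{Aut}(G)|$ choices of $\varphi$ and inserting it into the division above gives
\[
N(T) \ge \frac{H(T) - \beta}{T^{1/|G|}(\log T)^\gamma},
\]
with $\gamma$ and $\beta$ depending only on $E$.
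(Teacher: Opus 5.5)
The paper gives no argument of its own here; it simply cites \cite[Theorem 1.3]{De17}. Your proposal also rests on D\'ebes for the decisive step, so it is consistent with the paper, and your skeleton has the shape of D\'ebes's argument: the twisting lemma, then a reduction (for each $F$) to integral points on the $|\mathrm{Aut}(G)|$ twists $\widetilde V^{\varphi}$ of degree $|G|$, then dividing $H(T)-\beta$ by the largest fibre of $t_0\mapsto E_{t_0}$. Be aware, though, that your Step 2 does not stand on its own: ``very sparse'' in genus $\ge 1$ is not a bound uniform over infinitely many twists. A square-box Bombieri--Pila count on a plane model of total degree $|G|$ loses the exponent $1/|G|$ once the twist's coefficients, and hence the range of the second coordinate, grow with $T$, so that exponent has to come from $\deg_Y=|G|$ via a weighted-box determinant-method bound. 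The uniformity with only a $(\log T)^{\gamma}$ loss is what D\'ebes supplies, as I understand his argument, via the self-twisted cover (all twists occur as fibres of one cover, so their equations have height polynomial in $t_0\le T$) combined with a count that depends only logarithmically on that height.
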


\begin{proof}
    See \cite[Theorem 1.3]{De17}.
\end{proof}

\begin{remark}
\label{remark:poly}
    We discussed everything in terms of a Galois cover over $\PP^1$. In many cases, polynomials $f(x,t) \in \Z[t,x]$ are considered, and we set $E$ to be the splitting field of $f$ over $\Q(t)$. Let $W = \Spec\big( \Q[x,t]/(f) \big)$ and $V = W^\Gal$ be the Galois closure; then this reduces to the above situation by taking the completion of the curves. See \cite[Proposition 3.3.5]{serre_topics_2016} for more details.
\end{remark}

\subsection{Prescribing Frobenius}
We can control local Frobenius classes for sufficiently large $p$ using Serre specialization. Using the Hasse–Weil bound for curves over finite fields, Serre showed that it can be controlled by imposing a condition on $t_0 \mod p$. 

\begin{proposition}[Serre]
\label[proposition]{prop:serre}
    In the above setting, let $p_i$ be a finite sequence of primes and $[c_{p_i}]$ be a sequence of conjugacy classes in $G$ indexed by $p_i$. Then there is a constant $\lambda$, which depends on the cover (in fact, an integral model of the cover $V$), such that for all $p_i \ge \lambda$ we know $p_i$ is unramified and
    $$
    [\Frob_{p_i}] = [c_{p_i}]
    $$
    provided that $t_0 \equiv t_{p_i} \mod p_i$ for all $p_i$. Here, $\Frob_p$ denotes the corresponding Frobenius element for the extension $E_{t_0}/\Q$.
\end{proposition}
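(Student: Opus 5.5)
The approach is to spread the covering points of the $G$-cover $V$ out into a smooth family over $\mathrm{Spec}\,\Z[1/S]$ and then reduce modulo $p$. We fix an integral model: choose a polynomial (or affine equations) over $\Z$ whose splitting field over $\Q(t)$ is $E$, as in Remark \ref{remark:poly}. Let $\Delta(t)\in\Z[t]$ be a nonzero discriminant-type polynomial whose vanishing locus, together with $\infty$, contains the branch locus $B$ of $V\to\PP^1$. Let $S$ be the finite set of primes where one of the following degenerates: the leading coefficient of $\Delta$, the discriminant of the polynomial $\Delta$ itself (so that branch points stay distinct mod $p$), or the primes dividing $|G|$. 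Outside $S$, the cover $V\to\PP^1$ spreads out to a finite étale $G$-Galois cover $\mathcal V\to \PP^1_{\Z_{(p)}}\setminus \mathcal B$ with $\mathcal B$ the closure of $B$. The constant $\lambda$ will be $\max(S)+1$, enlarged later by a Weil-bound condition.

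Step one is the ramification claim. If $t_0\in\Z$ is a Hilbert point and $t_0 \bmod p$ does not lie in $\mathcal B(\mathbb F_p)$, then the specialization $E_{t_0}$ is the fiber of an étale cover over the $\Z_{(p)}$-point $t_0$. Hence $p$ is unramified in $E_{t_0}$, and the decomposition group at $p$ is generated by the Frobenius of the fiber $\mathcal V_{\bar t_0}$ over $\mathbb F_p$. Step two identifies this Frobenius. The fiber of the reduced cover $\mathcal V_{\mathbb F_p}\to\PP^1_{\mathbb F_p}$ over an unramified point $\bar t\in\mathbb F_p$ is a $G$-torsor over $\mathbb F_p$, and its Frobenius conjugacy class depends only on $\bar t$. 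Since the diagram in Theorem \ref{thm:hilbert} is Cartesian and reduction commutes with base change, $[\Frob_p(E_{t_0}/\Q)]$ equals the class attached to $\bar t=t_0 \bmod p$. So it suffices to find, for each $c=c_{p_i}$, some $t_{p_i}\in\mathbb F_{p_i}\setminus\mathcal B$ whose Frobenius class is $[c]$, and then impose $t_0\equiv t_{p_i}$.

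Step three is the existence of such a residue, and this is the main obstacle. Fix $c\in G$ and form the twisted curve $V^{c}$ over $\mathbb F_p$: this is $\mathcal V_{\mathbb F_p}$ with Frobenius replaced by $c^{-1}\circ\Frob$. The $\mathbb F_p$-points of $V^c$ lying over unramified $\bar t\in\PP^1(\mathbb F_p)$ are exactly the points whose Frobenius class contains $c$. Because $E$ is regular, $V$ is geometrically irreducible, and since geometric irreducibility spreads out, $\mathcal V_{\mathbb F_p}$ is geometrically irreducible for $p$ large after enlarging $S$. Hence $V^c$ is a geometrically irreducible smooth curve of genus $g$ independent of $p$, by Riemann–Hurwitz with tame ramification. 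The Hasse–Weil bound then gives $|V^c(\mathbb F_p)|\ge p+1-2g\sqrt p$. We discard at most $|G|(\deg\Delta+1)$ points over the branch locus, and each $\bar t$ accounts for at most $|G|$ points. So a suitable $\bar t$ exists once $p+1-2g\sqrt p>|G|(\deg\Delta+1)$, which holds for $p\ge\lambda$ with $\lambda$ depending only on the integral model. Since $\bar t\notin\mathcal B$ forces $p$ unramified, we conclude $[\Frob_{p_i}]=[c_{p_i}]$ whenever $t_0\equiv t_{p_i}\bmod p_i$ for every $p_i\ge\lambda$. The congruences for distinct $p_i$ are independent by the Chinese remainder theorem. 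The delicate points to verify carefully are the good-reduction and geometric-irreducibility uniformity, which is the content of Serre's argument \cite[§3.5]{serre_topics_2016} that we would cite for the spreading-out details.
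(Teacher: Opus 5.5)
Your proposal is correct and is essentially the argument the paper relies on. The paper gives no proof of its own beyond citing Serre's specialization argument and the proof of Duke's Proposition 3, whose key steps are exactly yours: the fiber over $\Z_{(p)}$ is \'etale when $t_0 \bmod p$ avoids the reduced branch locus, and the Hasse--Weil bound applied to the twisted curve $V^c$ over $\mathbb{F}_p$ produces the residue $t_{p}$. One imprecision: the primes you list first (leading coefficient and discriminant of $\Delta$, divisors of $|G|$) do not by themselves give an \'etale, smooth, geometrically irreducible spread-out model, so $S$ must be enlarged from the outset, as you acknowledge later.
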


\begin{proof}
    See \cite[Section 4.6]{serre_topics_2016} or proof of \cite[Proposition 3]{Duke03}.
\end{proof}

\subsection{Estimate via short sum}

As in Montgomery--Weinberger \cite{MW77} or Lemke Oliver--Thorner--Zaman \cite{LOTZ24}, the estimate of $\log L(1,\chi)$ by a short sum can be obtained on average without GRH. A recent large sieve result by Lemke Oliver--Smith \cite{oliver_faithful_2024} yields such an estimate for most faithful irreducible $L$-functions without assuming GRH or AHC.

\begin{proposition}[Lemke Oliver, Smith]
\label[proposition]{prop:density}
    Let $k$ be a number field, $G$ be a finite group and $A>0$. Let $Q \ge 1$ and $\epsilon > 0$. Then, apart from at most $O_{k,G,\epsilon,A}(Q^\epsilon)$ exceptional fields $K$, each faithful, irreducible Artin L-function $L(s,\chi)$ attached to $\Gal(K/k)$ for a Galois $G$-extension $K$ with $D_K \le Q$ satisfies
    $$
    \log L(1,\chi) = \sum_{N(\p)\le (\log D_K)^A} \log L_\p(1,\chi) +O_{k,G,\epsilon,A}(1)
    $$
    where $L_\p(1,\chi) = \det(1-N(\p)^{-1}\Frob_\p|V^{I_\p})^{-1}$ is the local Artin factor.
\end{proposition}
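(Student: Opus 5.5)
This proposition is quoted from Lemke Oliver--Smith \cite{oliver_faithful_2024}, and in the paper it is used as a black box. If I had to prove it, I would follow the Montgomery--Weinberger and Lemke Oliver--Thorner--Zaman \cite{LOTZ24} template in two stages. The first stage is a statistical zero-density/large sieve estimate over the family of $G$-extensions, which produces the at most $O(Q^\epsilon)$ exceptional fields. The second stage is a deterministic complex-analytic argument that converts a zero-free region near $s=1$ into the short Euler product formula.

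\emph{Step 1 (large sieve over the family).} For each $G$-extension $K/k$ with $D_K\le Q$ and each faithful irreducible $\chi$ of $\Gal(K/k)$, the coefficients $\lambda_\chi(\p)=\chi(\Frob_\p)$ form a sequence. The starting point is that faithfulness makes the map $K\mapsto \chi$ essentially injective: $\ker\chi=1$, so $K$ is recovered as the fixed field of $\ker\chi$. Moreover, irreducibility gives $\langle \chi\bar\chi',\1\rangle=0$ whenever $\chi,\chi'$ come from distinct fields, because the compositum then carries $\chi\otimes\bar\chi'$ with no trivial constituent. By duality, this near-orthogonality yields a large sieve inequality
\[
\sum_{K}\sum_{\chi}\Big|\sum_{N\p\le N} a_\p\,\chi(\Frob_\p)\Big|^2\ll_\epsilon Q^{\epsilon}(N+Q^{C})\sum|a_\p|^2 .
\]
Inserting this into a standard mollifier/detector argument, in the Montgomery--Gallagher style applied to the meromorphic functions $L(s,\chi)$, gives a log-free zero-density estimate. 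It says that all but $O(Q^\epsilon)$ of the $L(s,\chi)$ are free of zeros and poles in the box
\[
\sigma\ge 1-\eta,\qquad |t|\le (\log Q)^{B},
\]
where $\eta=\eta(\epsilon,A)$ is small. I expect the main obstacle to be here: AHC is not available, so $L(s,\chi)$ is only known to be meromorphic. My first attempt would be to write $L(s,\chi)=\prod_i L(s,\psi_i)^{n_i}$ by Brauer induction with Hecke characters $\psi_i$, so that every zero and pole lies among zeros of Hecke $L$-functions of subfields of $K$. The detector would then have to be built from $\log L(s,\chi)$ rather than from $L(s,\chi)$. A further difficulty is that the number of Brauer factors is bounded only in terms of $|G|$, so the resulting losses in the count must be checked to stay within $Q^\epsilon$.

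\emph{Step 2 (short Euler product).} Fix a non-exceptional $K$ and set $x=(\log D_K)^A$. For a smooth weight, Perron's formula gives
\[
\frac{1}{2\pi i}\int_{(c)}\log L(1+w,\chi)\,\frac{x^w}{w}\,dw .
\]
This integral equals the sum of $\log L_\p(1,\chi)$ over $N\p\le x$, up to a smoothing error of $O(1)$ coming from primes near $x$, which Mertens-type bounds control. The remaining task is to shift the contour to $\Re w=-\eta/2$ inside the zero- and pole-free box. The residue at $w=0$ is $\log L(1,\chi)$. On the shifted line I would bound $\log L(1+w,\chi)\ll \log q(\chi)$ by Borel--Carath\'eodory, using the absence of zeros and poles in a slightly larger box together with the convexity bound. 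The shifted integral is then $O\big(x^{-\eta/2}\log D_K\big)=O(1)$, provided $A$ is taken large relative to $1/\eta$. The horizontal segments at height $(\log Q)^B$ are handled in the same way. The implied constants depend only on $k,G,\epsilon,A$, because $\chi(1)\le|G|^{1/2}$ and $\log q(\chi)\ll_G\log D_K$.
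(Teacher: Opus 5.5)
Your two-stage architecture is the right one: a zero-density statement over the family, followed by a deterministic contour shift. But Step~1 has a genuine gap, exactly where you suspected.

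The orthogonality $\langle \chi\overline{\chi'},\1\rangle=0$ only tells you that the off-diagonal function $L(s,\chi\otimes\overline{\chi'})$ on $\Gal(KK'/k)$ has neither a zero nor a pole at $s=1$. Turning duality into a large sieve with off-diagonal loss $Q^{C}$ requires control of this Artin $L$-function throughout the critical strip. At minimum it must be entire of polynomial growth, which is AHC for $\chi\otimes\overline{\chi'}$. A pole at $\beta+i\gamma$ with $\beta>1/2$ would contribute a term of size about $N^{\beta}$ to the corresponding off-diagonal sum.

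Your Brauer-induction repair does not close this gap. In $\chi=\sum_i n_i\,\mathrm{Ind}_{H_i}^G\psi_i$ the induced characters generally have non-faithful constituents (even $\1_G$, when $\psi_i=\1$). So the Hecke factors $L(s,\psi_i)$ contain Artin $L$-functions of proper subfields $K^N$, which are either shared by many fields in the family or outright constant, like $\zeta_k(s)$. This causes three failures:
\begin{enumerate}
\item Zeros of such factors in the box $\sigma\ge 1-\eta$, $|t|\le(\log Q)^B$ cannot be excluded for all but $O(Q^\epsilon)$ fields by averaging over $K$; no one can currently show that even $\zeta_k$ is zero-free in such a box as $Q\to\infty$.
\item The factor families are not orthogonal across $K$.
\item Because some $n_i<0$, the off-diagonal objects are not products of entire $L$-functions.
\end{enumerate}
The obstruction is therefore not the number of Brauer factors, as you suggest, but that they are not \emph{new} to $K$.

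The missing idea is the group-theoretic theorem behind the title of \cite{oliver_faithful_2024}: the span of the faithful irreducible characters of $G$ is spanned by the monomial characters $\mathrm{Ind}_H^G\psi$ ($\psi$ linear) that lie in it, i.e.\ all of whose irreducible constituents are faithful. Hence $L(s,\chi)^m=\prod_j L(s,\psi_j)^{a_j}$ with integers $a_j$. Each $L(s,\psi_j)$ is an entire Hecke $L$-function (necessarily $\psi_j\ne\1$) built only from faithful Artin $L$-functions of $K$, so distinct $K$ give distinct such functions. By Mackey's formula, $\mathrm{Ind}\psi\otimes\overline{\mathrm{Ind}\psi'}$ is a \emph{nonnegative} combination of monomial characters of $\Gal(KK'/k)$. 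So the off-diagonal terms are products of Hecke $L$-functions, and your orthogonality computation shows that none of the resulting Hecke characters is trivial when $K\ne K'$; everything is entire. This is what makes the large sieve and the log-free zero-density estimate unconditional. Zero-freeness of these finitely many Hecke families for all but $O(Q^\epsilon)$ fields then gives holomorphy and nonvanishing of every faithful $L(s,\chi)$ in the box.

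Two smaller points in Step~2. First, there is no convexity bound for $L(s,\chi)$ itself without AHC; the same decomposition fixes this, since you can apply Borel--Carath\'eodory to each Hecke factor instead. Second, your contour shift needs $A\gg 1/\eta(\epsilon)$. Smaller $A$ follows because primes with $(\log D_K)^A<N\p\le(\log D_K)^{A'}$ contribute only $O_{A,A'}(1)$ by Mertens.
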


\begin{proof}
    See \cite[Lemma 7.6 and the proof of Corollary 1.16]{oliver_faithful_2024}.
\end{proof}

\section{Proof of Theorems 2.1 and 2.2} 
\label{sec:ProofOfSec2}
We begin with the proof of \Cref{thm:shortEulerFaith}, as the proof of \Cref{thm:shortEuler} will be an iterative application of the ideas involved in \Cref{thm:shortEulerFaith}. Both proofs can be viewed as a generalization of Theorem 10.5 in \cite{LOTZ24}.

\begin{proof}[Proof of \Cref{thm:shortEulerFaith}] Since $G$ is regular Galois over $\Q$, we consider the geometrically irreducible Galois cover $V \to \PP^1$ with group $G$. Fix conjugacy classes $[c_p]$ in $G$ as in the theorem. Applying \Cref{prop:serre} with $p_i \le  (\log T)^{1/2}$ and $[c_{p}]$, we know there is a constant $\lambda$ together with $t_p$ such that if $t_0 \equiv t_p \mod p$, and if we specialize to $E_{t_0}$,  $\Frob_p = [c_p]$ for all $\lambda \le p \le  (\log T))^{1/2}$ and $p$ is unramified.

As in \cite[Theorem 10.5]{LOTZ24}  we define a congruence class $a \mod M$ by
$$
M=\prod_{\lambda<p\le (\log T))^{1/2}} p ,\text{and } a \equiv t_p \mod p
$$
and we note that $M = \exp(O((\log T)^{1/2}))$ by the prime number theorem.

For any $T>0$, we consider all $t_0 \le T$ with $t_0 \equiv a \mod M$. Using \Cref{thm:cohen}, it follows that there are at most $O(T^{1/2} \log T)$ exceptions with a specialized Galois group that differs from $G$. Combining these implies that we still have $\gg_\epsilon T^{1-\epsilon}$ for all $t_0 \le T$. Now, taking $\mathcal{H}(T)$ to be the ${t_0}$ satisfies above, \Cref{thm:debes} shows that we produced at least $\gg_\epsilon T^{1-1/|G| -\epsilon}$ distinct number fields. Further, by taking an integral monic polynomial model, it is easy to see that $D_{E_{t_0}} \ll_G T^A$ by \cite{Mah64}, for example. Now using \Cref{prop:density}, for any faithful irreducible character $\chi$, we have 

$$
\log L(1,\chi) = \sum_{p\le \log D_{E_{t_0}}} \log L_p(1,\chi) +O_{G,\epsilon}(1)
$$
with at most $O_{G,\epsilon'}(T^{\epsilon'A})$ exceptional number fields. Taking $\epsilon'$ small enough with $A\epsilon'<1-1/|G|-\epsilon$, we still have   $\gg_\epsilon T^{1-1/|G| -2\epsilon}$ distinct number fields with the above property.

Using the conductor-discriminant formula, we know that $\log q(\chi) \le \log D_{E_{t_0}}$. Hence, we can choose any $\delta<\frac{1}{2}$ and enlarge $\lambda$ so that $(\log q(\chi))^\delta\le (\log T)^{1/2}$ for any $(\log q(\chi))^\delta \ge \lambda$. Thus, with $\log q(\chi)\asymp_G  \log D_{E_{t_0}}$ from \cite[Lemma 4.2]{cho_effective_2025}, Mertens' formula and the fact that $\log L_p(1,\chi)  = \frac{\chi(p)}{p}+O_G(p^{-2})$, it follows that
$$
\log L(1,\chi) = \sum_{p\le (\log q(\chi))^\delta} \frac{\chi(p)}{p} +O_{G,\epsilon}(1)
$$
for the remaining $E_{t_0}$, which is what we need.
\end{proof}

Although a simpler proof can in fact be obtained by directly using \cite[Theorem 1.1]{De17} instead of \Cref{thm:debes}, this approach is insufficient for the proof of \Cref{thm:shortEuler}.

Now, to prove \Cref{thm:shortEuler}, we need to consider all irreducible characters. Let $\chi$ be any non-trivial irreducible character and $\rho_\chi$ be a representation with character $\chi$ which is determined uniquely up to isomorphism. Let $N_\chi = \ker \rho_\chi$. Then, $\rho_\chi$ factorizes through the quotient $G/N_\chi$, and we let the factored representation be $\rho_\chi^\dagger$ with character $\chi^\dagger$. Now, $L(s,\chi)=L(s,\chi^\dagger)$ and $\chi^\dagger$ is irreducible and faithful. Hence, \Cref{prop:density} may be applied, but this crucially requires all subfields $E_{t_0}^N$ to change along $t_0$. An iterative application of \Cref{thm:debes} overcomes this issue of changing subfields.

\begin{proof}[Proof of \Cref{thm:shortEuler}]
    As above, we fix a geometrically irreducible Galois cover $V \to \PP^1$ with group $G$. For any proper normal subgroup $N \triangleleft G$, we consider its corresponding cover $V^N \to \PP^1$. We fix conjugacy classes $[c_p]$ in $G$ as above, and we know for any $T>0$, there are at least $\gg_\epsilon T^{1-\epsilon}$ such that $t_0 \le T$, $t_0$ satisfies Hilbert irreducibility for all covers $V^N$ and $\Frob_p = [c_p]$ for all $\lambda<p\le (\log T)^{1/2}$. Now, it follows that the diagram below is Cartesian for these $t_0$'s, with corresponding fields being $E_{t_0}^N$ for all $N \triangleleft G$.
    \[\begin{tikzcd}
	{\Spec E_{t_0}} && V \\
	{\Spec E_{t_0}^N} && {V^N} \\
	{\Spec \Q} && {\PP^1}
	\arrow[from=1-1, to=1-3]
	\arrow[from=1-1, to=2-1]
	\arrow["\ulcorner"{anchor=center, pos=0.125}, draw=none, from=1-1, to=2-3]
	\arrow[from=1-3, to=2-3]
	\arrow[from=2-1, to=2-3]
	\arrow[from=2-1, to=3-1]
	\arrow["\ulcorner"{anchor=center, pos=0.125}, draw=none, from=2-1, to=3-3]
	\arrow[from=2-3, to=3-3]
	\arrow[from=3-1, to=3-3]
    \end{tikzcd}\]

We apply \Cref{thm:debes} to the cover $V \to \PP^1$ with these $t_0$. For each distinct number field, choosing a representative $t_0 \in \Z$ implies that there are still $\gg_\epsilon T^{1-1/|G| -\epsilon}$ of them. Then, we may iterate this to the cover $V^N \to \PP^1$. Hence, after running over all proper $N \triangleleft G$, we remain to have $\gg_\epsilon T^{1-\sum_{N \triangleleft G, N\ne G} 1/[G:N] -\epsilon}$ many $t_0 \le T$ and it is clear that for these $t_0$, $E_{t_0}^N$ are all distinct for any $N \triangleleft G$. As before, we also have $D_{E_{t_0}} \ll_G T^A$. Since $\sum_{N \triangleleft G, N\ne G} |N| < |G|$, it follows that $\sum_{N \triangleleft G, N\ne G} 1/[G:N] <1$. By choosing $\epsilon$ sufficiently small, we know that $1-\sum_{N \triangleleft G, N\ne G} 1/[G:N] -3\epsilon>0$. Taking $\epsilon'A < 1-\sum_{N \triangleleft G, N\ne G} 1/[G:N] -\epsilon$, by \Cref{prop:density}, any faithful irreducible character $\chi$ on $G/N$, $\log L(1,\chi)$ can be approximated by a short sum 
$$
\log L(1,\chi) = \sum_{p\le \log D_{E_{t_0}^N}} \log L_p(1,\chi) +O_{G,\epsilon}(1),
$$
with at most $O_{G,\epsilon'}(T^{\epsilon'A})$ exceptions. Hence, by removing these for all $N$, we have
$$
\gg_\epsilon T^{1-\sum_{N \triangleleft G, N\ne G} 1/[G:N] -2\epsilon}
$$
such $t_0$ left, which clearly gives an infinite family as $T \to \infty$. 

Using the bound
$$
\#\big\{E_{t_0}^N: t_0<T, D_{E_{t_0}^N}<T^\eta\big\} \ll_G T^{|G|\eta},
$$
we may further choose $|G| \eta < 1-\sum_{N \triangleleft G, N\ne G} 1/[G:N] -2\epsilon$ so that after removing these fields, we still have 
$$
\gg_\epsilon T^{1-\sum_{N \triangleleft G, N\ne G} 1/[G:N] -3\epsilon}
$$
desired $t_0$ and $\log D_{E_{t_0}^N} \asymp_G \log T$.

We now claim these $t_0$ work as in the theorem. Indeed, take any irreducible character $\chi$ and let $N:=N_\chi$ be the kernel of its representation. Hence, $\chi$ descents to a  faithful irreducible $\chi^\dagger$ on $G/N$ and
$$
\log L(1,\chi) =\log L(1,\chi^\dagger)= \sum_{p\le \log D_{E_{t_0}^N}} \log L_p(1,\chi^\dagger) +O_{G,\epsilon}(1).
$$
Since now we also have 
$$\log q(\chi) =\log q(\chi^\dagger) \asymp_G \log D_{E_{t_0}^N} \asymp_G \log T,$$
we know that for $\delta < \frac{1}{2}$ and $\lambda$ sufficiently large, $(\log q(\chi))^\delta\le (\log T)^{1/2}$ for $(\log q(\chi))^\delta \ge \lambda$. Thus, we have for all $\lambda \le p \le (\log q(\chi))^{\delta}$,  $p$ is unramified and $[\Frob_p] = [c_p]$. Further, by $ \log q(\chi)\asymp_G  \log D_{E_{t_0}^{N}}$, Mertens' formula and $ \log L_p(1,\chi) = \frac{\chi(p)}{p} +O_G(p^{-2})$, it follows that
$$
\log L(1,\chi) = \sum_{p\le (\log q(\chi))^\delta} \frac{\chi(p)}{p} +O_{G,\epsilon}(1)
$$
for the remaining $E_{t_0}$. So, the theorem follows by observing that $\log q(\chi)\asymp_G \log T$.
\end{proof}

\section{The finite Abelian case}
\label{sec:AbelianCase}
Note that $G$ being regular Galois over $\Q$ and $\sum_{N \triangleleft G, N\ne G} |N| < |G|$ does not cover all Abelian cases. Indeed, the density argument is inherently easier for groups with fewer normal subgroups. Abelian groups, on the other hand, have many normal subgroups, which requires a more precise control of how all subfields vary. We consider a different family instead of directly using Hilbert irreducibility. This section is dedicated to establishing an analogue of \Cref{thm:shortEuler} when $G$ is finite Abelian.
\begin{theorem}
    \label{thm:shortEulerAb}
    Let $G$ be a finite Abelian group. For any $c_p$ in $G$, there are infinitely many number fields $K$ with isomorphism $\Gal(K/\Q) \xrightarrow[]{\sim}G$. For any irreducible character $\chi\ne 1$ on $G$, we have
    $$
    \log L(1,\chi) = \sum_{p\le (\log q(\chi))^{\delta}}  \chi(p) p^{-1} +O_{G,\chi}(1)
    $$
    for some $\delta>0$, where $\chi(p) = 1/|I_p| \sum_{j \in I_p} \chi(\Frob_p j)$. Further, there is a constant $\lambda=\lambda(G)$ such that for all $\lambda \le p \le (\log q(\chi))^{\delta}$, we have $p$ unramified, $\Frob_p = c_p$, and $\log q(\chi) \asymp_G \log q(\psi)$ for any $\chi,\psi$ non-trivial irreducible characters.
\end{theorem}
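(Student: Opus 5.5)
Write $G \simeq \prod_{i=1}^r \Z/m_i\Z$. The plan is to build $K$ as a compositum $K = K_1\cdots K_r$ of cyclic fields $K_i/\Q$ of degree $m_i$, each the unique degree-$m_i$ subfield of a prime cyclotomic field $\Q(\zeta_{q_i})$. The primes $q_i \equiv 1 \bmod m_i$ are distinct and are chosen in a dyadic range $(Q,2Q]$. Then $\Gal(K/\Q) \simeq G$ automatically, because the $q_i$ are distinct and so the $K_i$ are linearly disjoint (they ramify at disjoint sets of primes). Every nontrivial character $\chi$ of $G$ is a Dirichlet character of conductor $\prod_{i\in S} q_i$ for some nonempty $S$. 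Hence $\log q(\chi) \asymp_G \log Q$ for all nontrivial $\chi$, which is the last claim of the theorem. The family is then indexed by tuples $(q_1,\dots,q_r)$, and distinct tuples give distinct fields, so there is no counting loss of the kind coming from \Cref{thm:debes}.

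To prescribe Frobenius, note that for $p \ne q_i$ the Frobenius of $p$ in $K_i$ is the image of $p$ in $(\Z/q_i\Z)^\times / ((\Z/q_i\Z)^\times)^{m_i} \simeq \Z/m_i\Z$. This image is encoded by the $m_i$-th power residue symbol $\big(\frac{p}{\q_i}\big)_{m_i}$, where $\q_i \mid q_i$ is a prime of $\Q(\zeta_{m_i})$ and the identification depends on a fixed choice of $\q_i$. Requiring that these symbols take prescribed values for every $\lambda \le p \le (\log Q)^{1/2}$ is a splitting condition on $\q_i$ in the Kummer extension $L_i = \Q(\zeta_{m_i}, p^{1/m_i} : \lambda\le p\le (\log Q)^{1/2})$. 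By Kummer theory, $\Gal(L_i/\Q(\zeta_{m_i}))$ is $(\Z/m_i\Z)^{\pi((\log Q)^{1/2})}$, and the required condition defines a single element of this group, giving a Chebotarev condition of density $m_i^{-\pi((\log Q)^{1/2})} = \exp(-O((\log Q)^{1/2}))$. Here $\lambda$ is taken larger than all prime divisors of $m_i$.

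The key step, and the main obstacle, is counting primes $q\in(Q,2Q]$ satisfying this condition. The extension $L_i$ has degree $\exp(O((\log Q)^{1/2}))$, and its discriminant is divisible only by primes $\le (\log Q)^{1/2}$ and by primes dividing $m_i$. So $\log D_{L_i} \ll [L_i:\Q]\log\log Q$, which is far too large for the classical effective Chebotarev theorem in ranges $Q$ of this size. I would first try the uniform unconditional Chebotarev bound of Thorner--Zaman \cite{TZ19}, which gives a lower bound of order $\pi(Q)/[L_i:\Q]$ once $Q \ge D_{L_i}^{C}$. If the degree causes trouble, the fallback is to shrink the range of prescribed primes to $p \le (\log Q)^{\delta'}$ with $\delta'$ small, or even $p\le c\log\log Q$. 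This would still cover $p\le(\log q(\chi))^\delta$ for a suitable (smaller) $\delta$, or else I would work over a tower adapted to the conductor. Either way, the outcome should be $\gg Q^{1-\epsilon}$ admissible primes $q_i$ for each $i$, and hence $\gg Q^{r(1-\epsilon)}$ fields.

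Finally, I would remove exceptional fields using \Cref{prop:density}, or more simply a classical zero-density or large sieve estimate for Dirichlet $L$-functions, since here every $L(s,\chi)$ is a Dirichlet $L$-function. Each $L(s,\chi)$ then satisfies $\log L(1,\chi) = \sum_{p \le (\log q(\chi))^A} \log L_p(1,\chi) + O(1)$, apart from $O(Q^{\epsilon})$ exceptional conductors. This is negligible compared with the size of the family. Mertens' formula reduces this to the stated short sum with $\chi(p)p^{-1}$ up to $O_G(1)$, as in the proof of \Cref{thm:shortEulerFaith}. The Frobenius classes at $\lambda\le p\le(\log q(\chi))^\delta$ are the prescribed $c_p$ by construction.
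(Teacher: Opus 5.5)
\textbf{Gap: the Chebotarev count for the Kummer condition.} Your architecture matches the paper's: a compositum of the degree-$m_i$ subfields of $\Q(\zeta_{q_i})$, Frobenius at $p$ read off from $\big(\frac{p}{\q_i}\big)_{m_i}$, hence a Chebotarev condition in $L_i=\Q(\zeta_{m_i})(p^{1/m_i}:\lambda\le p\le y)$, then a large sieve. But the step you flag as the main obstacle is left unresolved, and the routes you propose for it fail.

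\begin{itemize}
\item You invoke \cite{TZ19} under the hypothesis $Q\ge D_{L_i}^{C}$. Every $p\in[\lambda,y]$ ramifies in $L_i$ with index $m_i$, so $\log D_{L_i}\gg [L_i:\Q]\sum_{\lambda\le p\le y}\log p$, with $[L_i:\Q]\ge 2^{\pi(y)-O(1)}=\exp(c\,y/\log y)$.
\item For $y=(\log Q)^{1/2}$, and equally for $y=(\log Q)^{\delta'}$ with any fixed $\delta'>0$, this is vastly larger than $\log Q$. So no range depending on $D_{L_i}$ or $[L_i:\Q]$ is ever met, and shrinking $\delta'$ does not help.
\item Making $D_{L_i}$ small enough forces $y$ down to about $\log\log Q$, which is your second fallback. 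Then Frobenius is prescribed only for $p\lesssim\log\log Q=o\big((\log q(\chi))^{\delta}\big)$ for every fixed $\delta>0$. That does not give the statement, and it is not a harmless weakening: $\sum_{\log\log Q<p\le(\log Q)^{\delta}}1/p\sim\log\log\log Q$, so essentially all of the main term of the short sum would be uncontrolled.
\item ``A tower adapted to the conductor'' is too vague to count as a fix.
\end{itemize}

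\textbf{The missing idea.} $L_i$ is \emph{abelian} over the \emph{fixed} field $K=\Q(\zeta_{m_i})$. The Chebotarev theorem \cite[Theorem 1.4]{TZ19} only needs $x\ge (D_K\,\mathcal{Q}\,n_K^{n_K})^{c}$, where $\mathcal{Q}$ is the largest norm of the conductor of a character of $\Gal(L_i/K)$. The degree enters only through the main term $\big(\Li(x)-\theta_1\Li(x^{\beta_1})\big)/[L_i:K]$, and $[L_i:K]\le m_i^{\pi(y)}=Q^{o(1)}$.

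What you must supply is a conductor bound:
\begin{itemize}
\item For $p\nmid m_i$, $K(p^{1/m_i})/K$ is tamely ramified above $p$. So each of its characters has conductor dividing $\mathfrak{C}_{m_i}\prod_{\p\mid p}\p$, with $\mathfrak{C}_{m_i}$ supported above $m_i$ and independent of $p$.
\item Characters of the compositum are products of these, so their conductors divide the lcm. Hence $\mathcal{Q}\le C_{m_i}\big(\prod_{p\le y}p\big)^{\varphi(m_i)}=\exp(O(\sqrt{\log Q}))=Q^{o(1)}$.
\item This makes the theorem applicable with $y=(\log Q)^{1/2}$. A possible exceptional zero costs only a factor $\min(1,(1-\beta_1)\log Q)$, and Stark's bound gives $1-\beta_1\gg_{m_i}\mathcal{Q}^{-O(1)}$.
\item The result is $\gg Q^{1-\epsilon}$ admissible degree-one $\q_i$, hence admissible $q_i$.
\end{itemize}

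With that step in place, the rest of your plan is sound and somewhat cleaner than the paper's. The dyadic window gives $\log q(\chi)\asymp_G\log Q$ directly from the conductor $\prod_{i\in S}q_i$. The classical Dirichlet zero-density/large-sieve estimate suffices to discard exceptions: each exceptional conductor removes at most $O(Q^{r-1})$ tuples from a product family of size $\gg Q^{r(1-\epsilon)}$.
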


The cyclic case $G = \Z/m\Z$ is pivotal. Here we consider all prime $q \equiv 1 \mod m$ with $q\le T$ and we have $\Gal(\Q(\zeta_q)/\Q) \cong \Z/(q-1)\Z$. Hence, it has a unique subfield $E_q$ with $\Gal(E_q/\Q) \cong \Z/m\Z$ after we identify $\Gal(\Q(\zeta_q)/\Q) \cong \Z/(q-1)\Z$. For simplicity, we also fix roots of unity $\zeta_n$ once for all. We begin with controlling the Frobenius elements for these fields $E_q$.

\subsection{Prescribing Frobenius}
Throughout this subsection,  fix $G = \Z/ m \Z$. The main difficulty is to control how many such primes $q\le T$ satisfy $(\frac{E_q/\Q}{p}) = c_p \in G$ for all small primes $p$ with finitely many exceptions. Let $S(a)$ consist of all primes dividing $m$ and $a$.

\begin{lemma}
    \label[lemma]{lm:Reciprocity}
    Keep the notation as above and let $K = \Q(\zeta_m)$ and $F_p = K(p^{1/m})$ and let    
    $$C_p = \Big\{\sigma \in \Gal(F_p/K) : \sigma(p^{1/m}) = \zeta_m^{c_p} p^{1/m}\Big\}$$
    If a prime ideal $\q \in \Spec( O_K^{S(p)} )$ with $N(\q)=q$ being a rational prime and $(\frac{F_p/K}{\q}) \in C_p \subset \Gal(F_p/K)$, then $q \equiv 1 \mod m$ and there is an identification of $\Gal(\Q(\zeta_q)/\Q) \cong \Z/(q-1)\Z$ depends only on $\q$ such that $(\frac{E_q/\Q}{p}) = c_p$. 
\end{lemma}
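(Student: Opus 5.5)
The plan is to unwind both Frobenius conditions into statements about the $m$-th power residue symbol $\big(\frac{p}{\q}\big)_m$, and then choose the identification $\Gal(\Q(\zeta_q)/\Q)\cong \Z/(q-1)\Z$ so that the two statements match.

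\emph{Step 1: $q\equiv 1 \bmod m$.} Since $\q\in \Spec(O_K^{S(p)})$, the prime $\q$ lies over neither $p$ nor any prime dividing $m$. So $q\ne p$, $q\nmid m$, and $q$ is unramified in $K=\Q(\zeta_m)$. The residue degree of $\q$ over $q$ is $1$ because $N(\q)=q$. The residue degree of $q$ in $\Q(\zeta_m)$ is the order of $q$ in $(\Z/m\Z)^\times$, so $q\equiv 1\bmod m$. Hence $O_K/\q\cong \mathbb{F}_q$. Since $q\nmid m$, the reduction of $\zeta_m$ is a primitive $m$-th root of unity $\bar\zeta_m\in\mathbb{F}_q^\times$.

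\emph{Step 2: the Kummer Frobenius.} Since $\q\nmid mp$, $\q$ is unramified in $F_p=K(p^{1/m})$. The Artin symbol $\sigma=\big(\frac{F_p/K}{\q}\big)$ is characterized by $\sigma(p^{1/m})\equiv (p^{1/m})^{N\q}=(p^{1/m})^q$ modulo a prime above $\q$. Dividing by $p^{1/m}$, which is a unit there, gives $\sigma(p^{1/m})/p^{1/m}\equiv p^{(q-1)/m}\bmod \q$. The hypothesis $\sigma\in C_p$ therefore says
\[
p^{(q-1)/m}\equiv \zeta_m^{c_p} \pmod{\q}.
\]
Distinct $m$-th roots of unity stay distinct mod $\q$, so this congruence pins down $c_p \bmod m$.

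\emph{Step 3: the identification and the cyclotomic Frobenius.} Choose a generator $g$ of $\mathbb{F}_q^\times$ with $g^{(q-1)/m}=\bar\zeta_m$. Such a $g$ exists because the map $g\mapsto g^{(q-1)/m}$ sends the generators of $\mathbb{F}_q^\times$ onto the primitive $m$-th roots of unity. Identify $\Gal(\Q(\zeta_q)/\Q)\cong(\Z/q\Z)^\times\cong\Z/(q-1)\Z$ via $\sigma_a\mapsto a\mapsto \log_g a$, where $\sigma_a(\zeta_q)=\zeta_q^a$. With this identification $E_q$ corresponds to the quotient $\Z/(q-1)\Z\to\Z/m\Z$, and $\big(\frac{E_q/\Q}{p}\big)=\log_g p \bmod m$, since $\Frob_p=\sigma_p$ in $\Q(\zeta_q)$ for $p\ne q$. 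Then
\[
\bar\zeta_m^{\log_g p}=g^{\frac{q-1}{m}\log_g p}=p^{(q-1)/m}=\bar\zeta_m^{c_p}
\]
in $\mathbb{F}_q$, so $\log_g p\equiv c_p\bmod m$, which is the claim.

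\emph{Independence of choices.} Two admissible $g$ differ by $g'=g^u$ with $u^{}\equiv 1\bmod m$, because $g'^{(q-1)/m}$ must also equal $\bar\zeta_m$. This changes $\log$ by a unit congruent to $1$ mod $m$, so the induced map to $\Z/m\Z$ is unchanged. Hence the identification, at the level that matters for $E_q$, depends only on $\q$ through $\bar\zeta_m\in O_K/\q$.

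I expect the main care to be needed in Step 2, in the normalization of the Kummer Artin symbol. One must check that the congruence is taken in $O_{F_p}$ and descends to an honest equality of roots of unity mod $\q$. One must also check that the convention $\sigma(p^{1/m})=\zeta_m^{c_p}p^{1/m}$ matches the chosen identification in Step 3, rather than its inverse. Choosing $g$ adapted to $\bar\zeta_m$ is exactly what makes the two conventions agree.
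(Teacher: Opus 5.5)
Your proposal is correct and follows essentially the same route as the paper. Both arguments turn the Kummer Frobenius condition into $p^{(q-1)/m}\equiv\zeta_m^{c_p} \pmod{\q}$; the paper phrases this as weak reciprocity for $\big(\frac{p}{\q}\big)_m$. Both then identify $\Gal(\Q(\zeta_q)/\Q)$ with $\Z/(q-1)\Z$ using a primitive root $\alpha$ with $\alpha^{(q-1)/m}\equiv\zeta_m \pmod{\q}$, so that $\Frob_p=\sigma_p$ has discrete logarithm $\equiv c_p \pmod m$. Your extra checks are correct refinements that the paper leaves implicit: that such a generator exists, and that the induced map to $\Z/m\Z$ does not depend on which admissible generator is chosen.
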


\begin{proof}
    Since $N(\q)=q$ and $q$ is unramified, we know that $q$ splits completely. Hence, we have $q \equiv 1 \mod m$. Now, as $(\frac{F_p/K}{\q}) \in C_p$, using weak reciprocity, we have
    $$
    \big(\frac{F_p/K}{\q}\big)(p^{1/m}) = \left(\frac{p}{\q}\right)_m p^{1/m} = \zeta_m^{c_p} p^{1/m},
    $$
    where $\left( \frac{\cdot}{\cdot}\right)_m$ is the $m$-th power residue symbol. 
    Thus, we have $\left(\frac{p}{\q}\right)_m = \zeta_m^{c_p}$.
    
    Choosing a primitive root $\alpha \mod q$ such that 
    $$
    \alpha^\frac{q-1}{m} \equiv \zeta_m \mod \q,
    $$
    we identify $\sigma=(\zeta_q \mapsto \zeta_q^\alpha) \in \Gal(\Q(\zeta_q)/\Q)$ with  $1 \in \Z/(q-1)\Z$. By observing that $(\frac{\Q(\zeta_q)/\Q}{p}) = (\zeta_q \mapsto \zeta_q^p)$, we may write $p = \alpha^j \mod q$ and
    $$
    \zeta_m^{c_p} = \left(\frac{p}{\q}\right)_m = p^\frac{q-1}{m} = \alpha^\frac{j(q-1)}{m} = \zeta_m^j \mod \q.
    $$
    Hence, $j=c_p \mod m$ and the result follows as $(\frac{\Q(\zeta_q)/\Q}{p}) \equiv \sigma^{c_p} \mod \sigma^m$
\end{proof}

Thus, the above result turns the count of prescribed Frobenius at a fixed prime $p$ into a Chebotarev density estimate in the prime ideals $\q$ of $\Q(\zeta_m)$. Since multiple $p_i$'s are considered, we denote $F = F_{p_1} \cdots F_{p_r}$ to be the compositum. Now, as noted by Thorner-Zaman \cite[Theorem 1.4]{TZ19}, the relative quantity needed to control is
$$
Q(F) = \max_{\chi \text{ irred. in } \Gal(F/K)} N(\mathfrak{f}_\chi), 
$$
where $\f_\chi$ is the relative conductor of $\chi$. We can estimate $\mathfrak{f}_\chi$ for irreducible $\chi$ on $\Gal(F_p/K)$.

\begin{lemma}
    For any $p \nmid m$ and $\chi$ any irreducible character on $\Gal(F_p/K)$, we have $\mathfrak{f}_\chi \mid \mathfrak{C}_m \prod \p$ where the product is over all $\p \mid p$ and $\mathfrak{C}_m$ is supported on primes dividing $m$ and is uniform in $p$.
\end{lemma}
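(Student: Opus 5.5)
Since $K=\Q(\zeta_m)$ contains the $m$-th roots of unity, $F_p=K(p^{1/m})/K$ is a Kummer extension with $\Gal(F_p/K)$ cyclic of order dividing $m$, so every irreducible $\chi$ is a one-dimensional character. By class field theory its conductor $\f_\chi$ divides the conductor $\f(F_p/K)$ of the whole extension. The plan is therefore to bound $\f(F_p/K)$ prime by prime: show that only primes above $p$ and primes above $m$ can ramify, that each prime above $p$ occurs to exponent at most one, and that the exponents at primes above $m$ are bounded independently of $p$. Then $\mathfrak{C}_m := \prod_{\mathfrak{l}\mid m}\mathfrak{l}^{e_{\mathfrak{l}}}$, with the uniform exponents $e_{\mathfrak{l}}$ from the last step, gives the claim.

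\textbf{Step 1 (unramified primes).} A prime $\mathfrak{l}$ of $K$ with $\mathfrak{l}\nmid mp$ is unramified in $F_p$. Indeed, $x^m-p$ has discriminant $\pm m^m p^{m-1}$, which is a unit at $\mathfrak{l}$, so $\mathfrak{l}$ does not divide $\f_\chi$.

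\textbf{Step 2 (primes above $p$).} Let $\p\mid p$. Since $p\nmid m$, the residue characteristic $p$ is prime to the degree $[F_p:K]$, which divides $m$. Hence ramification at $\p$ is tame, and for a tamely ramified abelian extension the local conductor exponent is at most $1$. So $\p$ appears in $\f_\chi$ to exponent at most one. Because $p$ is unramified in $K$ when $p \nmid m$, the product $\prod_{\p\mid p}\p$ is just $pO_K$.

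\textbf{Step 3 (primes above $m$, the main obstacle).} This is the crux: the exponent at $\mathfrak{l}\mid m$ must be bounded independently of $p$. I would argue locally. At such an $\mathfrak{l}$, $p$ is a unit in $K_{\mathfrak{l}}$, so the local extension is $K_{\mathfrak{l}}(u^{1/m})$ for a unit $u$. There are only finitely many abelian extensions of $K_{\mathfrak{l}}$ of degree dividing $m$: the group $K_{\mathfrak{l}}^\times/(K_{\mathfrak{l}}^\times)^m$ is finite. So the conductor exponent of any such local extension is bounded by a constant $e_{\mathfrak{l}}$ depending only on $m$ and $\mathfrak{l}$. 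Concretely, one can take $e_{\mathfrak{l}}$ to be the minimal $n$ with $1+\mathfrak{l}^n\subseteq (K_{\mathfrak{l}}^\times)^m$; by the local conductor–norm correspondence, any extension of exponent dividing $m$ has conductor dividing $\mathfrak{l}^{e_{\mathfrak{l}}}$. Alternatively, one can use the explicit bound for Kummer conductors, which gives exponent at most $v_{\mathfrak{l}}(m)\,e(\mathfrak{l}/\ell)$ plus a constant, since $u$ is a unit. Either route gives a bound uniform in $p$, and this step is where care is needed.
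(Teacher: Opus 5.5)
Your proof is correct, and Steps 1 and 2 match the paper's argument: the discriminant $\pm m^m p^{m-1}$ of $x^m-p$ confines ramification to primes above $mp$, and tameness at each $\p\mid p$ caps the exponent at one.

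You diverge at Step 3, which the paper handles with the same discriminant you already used in Step 1. The relative discriminant $\oo_{F_p/K}$ divides $m^m p^{m-1}$. By the conductor--discriminant formula, $\oo_{F_p/K}=\prod_{\chi}\f_\chi$ is a product of integral ideals, so every $\f_\chi$ divides $m^m p^{m-1}$. Since $p$ is a unit at each $\mathfrak{l}\mid m$, the $\mathfrak{l}$-part of $\f_\chi$ divides that of $m^m$, and one may simply take $\mathfrak{C}_m=m^m\OO_K$.

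Your local class field theory argument is also valid: the norm group of any abelian extension of $K_{\mathfrak l}$ whose Galois group has exponent dividing $m$ contains the open subgroup $(K_{\mathfrak l}^\times)^m\supseteq 1+\mathfrak{l}^{e_{\mathfrak l}}$. It is more robust than the paper's route, because it bounds the $\mathfrak{l}$-part of the conductor uniformly for every Kummer extension $K(a^{1/m})$. This holds even when the generator $a$ is not an $\mathfrak{l}$-unit and no polynomial discriminant is under control.

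The cost is that your $e_{\mathfrak l}$ stays implicit unless you compute the higher unit groups. The paper's route takes one line and gives an explicit $\mathfrak{C}_m$. For this lemma only uniformity in $p$ is needed, so the discriminant bound suffices and Step 3 is not really the obstacle you anticipated.
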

\begin{proof}
    Since $F_p = K(p^{1/m})$ and $|\Disc(x^m - p)| = m^m p^{m-1}$, we know that
    $$
    \oo_{F_p/K} \mid m^m p^{m-1},
    $$
    where $ \oo_{F_p/K}$ is the relative discriminant. Hence, we may take $\mathfrak{C}_m = m^m$ by the conductor-discriminant formula. On the other hand, the only other $\p$'s that are ramified are those $\p \mid p$. However, since $p \nmid m$, the ramification is tame. Thus,
    $
    \nu_\p(\f_\chi) \le 1
    $
    and the result follows.
\end{proof}

\begin{lemma}
    \label[lemma]{lm:conductorBd}
    Taking all $p_i \nmid m$, then $Q(F) \le C_m (\prod_{i} p_i)^{\varphi(m)}$ for an absolute $C_m$ that only depends on $m$.
\end{lemma}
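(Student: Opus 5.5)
The plan is to reduce the bound on $Q(F)$ to the preceding lemma. We do this by showing that every irreducible character of $\Gal(F/K)$ has conductor dividing a fixed ideal built from the conductors on each $\Gal(F_{p_i}/K)$.

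First, note that $F/K$ is abelian. Each $F_{p_i}/K$ is a Kummer extension, since $K=\Q(\zeta_m)$ contains the $m$-th roots of unity, so $\Gal(F_{p_i}/K)$ embeds in $\Z/m\Z$. The compositum $F$ therefore has Galois group embedding into $\prod_i \Gal(F_{p_i}/K)$. Every irreducible character $\chi$ of $\Gal(F/K)$ is then one-dimensional, and it is the restriction of a product $\prod_i \chi_i$, where $\chi_i$ is a character of $\Gal(F_{p_i}/K)$.

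Next, we bound the conductor via class field theory. For Hecke characters, the conductor of a product divides the least common multiple of the conductors, so
\[
\f_\chi \mid \operatorname{lcm}_i \f_{\chi_i} \mid \mathfrak{C}_m \prod_i \prod_{\p \mid p_i} \p .
\]
Alternatively, one can argue locally. At a prime $\p \mid p_i$, only $F_{p_i}$ ramifies (the $p_j$ are distinct and none divides $m$), and that ramification is tame, so $\nu_\p(\f_\chi)\le 1$. At primes above $m$, the exponent is bounded by that of the different of the compositum, which is controlled by $\prod_i m^m$ only up to a uniform bound. Here I would use the local statement that the conductor exponent of an abelian character of $\Gal(F/K)$ at $\p\mid m$ is at most the maximum over the factors. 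That maximum is bounded by $\nu_\p(\mathfrak{C}_m)$ uniformly, because the local extension $K_\p(p_1^{1/m},\dots,p_r^{1/m})$ lies inside the maximal abelian extension of exponent $m$ of $K_\p$, which is a fixed finite extension depending only on $m$. This gives a constant $\mathfrak{C}'_m$ depending only on $m$.

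Finally, we take norms. Since $N(\prod_{\p\mid p}\p) \le p^{[K:\Q]} = p^{\varphi(m)}$, we get
\[
N(\f_\chi) \le N(\mathfrak{C}'_m)\prod_i p_i^{\varphi(m)},
\]
and we set $C_m = N(\mathfrak{C}'_m)$. The main obstacle is the uniformity at the primes above $m$, since the number of factors $r$ grows. The key point is to avoid multiplying $\mathfrak{C}_m$ over all $i$. I would handle this either with the lcm property of conductors or with the observation about the finite maximal exponent-$m$ abelian extension of $K_\p$ described above.
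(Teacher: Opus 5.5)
Your proposal is correct and follows the paper's argument. You write each character of $\Gal(F/K)$ as (the restriction of) a product of characters $\chi_i$ of the $\Gal(F_{p_i}/K)$, bound $\f_\chi \mid \mathrm{lcm}_i\, \f_{\chi_i} \mid \mathfrak{C}_m \prod_i \prod_{\p \mid p_i} \p$ using the preceding lemma (the paper cites \cite[Lemma 6.6]{LOTZ24} inductively for the lcm step), and take norms. Your use of the embedding $\Gal(F/K) \hookrightarrow \prod_i \Gal(F_{p_i}/K)$ sidesteps the linear disjointness the paper invokes, and the local argument at primes above $m$ is not needed once the lcm bound is in place.
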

\begin{proof}
    Indeed, since $F_{p_i}$'s are linearly disjoint, any irreducible character $\chi$ on $\Gal(F/K)$ is a tensor product $\chi_1 \otimes \cdots \otimes \chi_r$, where $\chi_i$ can be viewed on $\Gal(F_{p_i}/K)$ after projection. Using \cite[Lemma 6.6]{LOTZ24} inductively, we see that $\f_\chi \mid \mathrm{lcm}(\f_{\chi_1},...,\f_{\chi_r}) \mid \mathfrak{C}_m \prod_\p \p$ where the product is taking over all $\p \mid p_i$ for some $i$. The result then follows by taking the norm.
\end{proof}

We can give the density estimate from here.

\begin{proposition}
    \label[proposition]{prop:densityAb}
    Keep the same assumptions as above; let
    $$
    N(T) = \# \Big \{q \le T:\big(\frac{E_q/\Q}{p}\big) = c_p \text{, unramified for } p\le \sqrt{\log T} \text{, } p \nmid m\text{ and } q \equiv 1 \mod m \Big \} 
    $$
    Then, $N(T) \gg_{m,\lambda} T^\lambda$ for any $\lambda<1$.
\end{proposition}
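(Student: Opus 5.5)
Set $P=\{p\le\sqrt{\log T}: p\nmid m\}$ and let $F=\prod_{p\in P}F_p$ be the compositum over $K=\Q(\zeta_m)$. The plan is to reduce the count $N(T)$ to a Chebotarev count in $F/K$, via \Cref{lm:Reciprocity}, and then apply the Thorner--Zaman uniform Chebotarev theorem \cite[Theorem 1.4]{TZ19} with the conductor bound of \Cref{lm:conductorBd}.

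\textbf{Reduction to Chebotarev.} Since the $F_p$ are linearly disjoint over $K$, we have $\Gal(F/K)\cong\prod_{p\in P}\Gal(F_p/K)\cong(\Z/m\Z)^{|P|}$. The set
$$C=\prod_{p\in P} C_p\subset \Gal(F/K)$$
is a single element, since each $C_p$ is one element. By \Cref{lm:Reciprocity}, each degree-one prime $\q$ of $K$ with $N(\q)=q\le T$, $\q$ coprime to $m\prod_{p\in P} p$, and $\big(\frac{F/K}{\q}\big)=C$ produces a prime $q\equiv 1\bmod m$ with $\big(\frac{E_q/\Q}{p}\big)=c_p$ for every $p\in P$. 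This uses one identification of $\Gal(\Q(\zeta_q)/\Q)$ for all $p$ simultaneously; the identification depends only on $\q$, which is the point. Each such $q$ is unramified at $p$ because $p\ne q$ for $q$ large. At most $\varphi(m)$ primes $\q$ lie over a given $q$, so
$$N(T)\ge \varphi(m)^{-1}\,\#\Big\{\q : N\q\le T \text{ prime},\ \big(\tfrac{F/K}{\q}\big)=C\Big\}.$$
Prime ideals of degree at least $2$ number only $O(\sqrt T)$ and can be discarded.

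\textbf{Chebotarev input.} By \Cref{lm:conductorBd} and the prime number theorem,
$$\log Q(F)\le \log C_m+\varphi(m)\sum_{p\le\sqrt{\log T}}\log p\ll_m\sqrt{\log T}.$$
Also $[F:K]=m^{|P|}=\exp\big(O(\sqrt{\log T}/\log\log T)\big)$. Thorner--Zaman gives, for $T\ge (D_K Q(F)[F:K])^{B}$ with an absolute constant $B$, the lower bound
$$\pi_C(T,F/K)\gg \frac{|C|}{|\Gal(F/K)|}\,\frac{T}{\log T}.$$
Here the size condition holds for large $T$, since the right-hand side is $\exp(O_m(\sqrt{\log T}))$. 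The density is
$$m^{-|P|}=\exp\big(-O(\sqrt{\log T})\big)=T^{-o(1)},$$
so $N(T)\gg T^{1-o(1)}\gg_{m,\lambda}T^\lambda$ for every $\lambda<1$.

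\textbf{Main obstacle.} The hard step is the uniformity: one must check that the Thorner--Zaman theorem applies in its lower-bound form, with no exceptional Siegel-zero loss. Their Theorem 1.4 is designed to be free of such losses through the conductor $Q$; if it is not, I would fall back on the version for abelian extensions, where $F/K$ is abelian and Hecke characters of conductor $\le Q$ are involved. A possible exceptional zero there costs at most a factor $Q^{-O(1)}=T^{-o(1)}$, which is still admissible. A second point to check is that the range $T\ge \exp(O(\sqrt{\log T}))$ falls comfortably within the theorem's hypotheses.
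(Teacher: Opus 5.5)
Your proposal follows the paper's proof essentially step for step. You reduce via \Cref{lm:Reciprocity} to counting degree-one primes $\q$ of $K=\Q(\zeta_m)$ with prescribed Frobenius in the compositum $F/K$, correctly noting that the identification depends only on $\q$. You then bound $Q(F)\le C_m\exp(O_m(\sqrt{\log T}))$ by \Cref{lm:conductorBd} and apply \cite[Theorem 1.4]{TZ19} with $[F:K]=T^{o(1)}$.

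The one correction is that Theorem 1.4 of \cite{TZ19} does carry the term $-\theta_1\Li(T^{\beta_1})$, so your fallback is required rather than optional. It is exactly what the paper does: $\Li(T)-\Li(T^{\beta_1})\gg\min(T/\log T,\,T(1-\beta_1))$ together with Stark's bound $1-\beta_1\gg_m Q(F)^{-2}(\log Q(F))^{-1}$ shows that a possible exceptional zero costs only a factor $T^{-o(1)}$.
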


\begin{proof}
    We use the same notation as above, and we take $p_i$ to be all primes less than $\sqrt{\log T}$. In view of \Cref{lm:Reciprocity}, it is enough to count primes $\q$ in $K$ with $m(\prod_i p_i)\le N(\q)\le T$, $\q$ has residue degree 1 and $(\frac{F/K}{\q}) = c_p$ where these $c_p \in C_p$ after projection. 
    Now, we observe the following facts:
    \begin{itemize}
        \item Using the prime number theorem, we have $\prod_i p_i = \exp (O (\sqrt{ \log T}))$
        \item  There are only $O(\sqrt T \log T)$ many $\q$ with $f(\q \mid q)>1$.
        \item There is at most a $\varphi(m)$-to-1 relationship between $\q$ and $(q) = \q \cap \Z$.
    \end{itemize}
    Hence, the result follows once we establish that the following quantity
    $$
    \Lambda(T) = \# \Big \{ N(\q) \le T: (\frac{F/K}{\q}) =c_p  \text{ for all } p\le \sqrt{\log T} \text{ with } p \nmid m \Big \}
    $$
    satisfies $\Lambda(T) \gg_{m,\lambda} T^\lambda$ for any $\lambda<1$.
    For this, we directly apply an absolute Chebotarev density theorem due to \cite[Theorem 1.4]{TZ19} with $x = T$ to the extension $F/K$. The constants $c_i$ are as in \cite{TZ19}. Since $Q(F) \le C_m (\prod p)^{\varphi(m)} \le C_m \exp(O_m(\sqrt{\log T})) \ll_{m,\epsilon} T^\epsilon$ by \Cref{lm:conductorBd}, we know that $T \ge (D_K Q(F) [K:\Q]^{[K:\Q]})^{c_2}$ for large $T$. Hence, the theorem applies and
    $$
    \Lambda(T) = \frac{\Li(T) -\theta_1 \Li(T^{\beta_1})}{[F:K]} \left( 1+ O\Bigg( \exp \Big[ \frac{-c_3 \log T}{ \log(D_K Q(F) n_K^{n_K})} \Big] + \exp\Big [ \frac{-(c_3 \log T)^{1/2}}{n_K^{1/2}}\Big ] \Bigg) \right),
    $$
    where $\beta_1$ is the potential Siegel zero, $\theta_1 \in \{-1,0,1\}$ and $n_K = [K:\Q]$.
    Using the bound for $Q(F)$, we see that
    $$
    \Lambda(T) \gg_m (\Li(T) - \theta_1 \Li(T^{\beta_1}))/[F:K].
    $$
    Since $[F:K] \le m^{\pi(\sqrt{\log T})} = T^{o(1)}$ by the prime number theorem, it is enough to show that $(\Li(T) - \theta_1 \Li(T^{\beta_1}) \gg_{m,\lambda} T^{\lambda}$ for any $\lambda<1$. Now, the worse case scenario is when $\theta_1 = 1$ and $\beta_1$ exists and in this case, we have
    $$
    \Li(T) -\Li(T^{\beta_1}) \gg \frac{1}{\log T} (T - T^{\beta_1}) = \frac{T}{\log T} (1-T^{\beta_1-1}) = \frac{T}{\log T} (1-\exp((\beta_1-1)\log T) ) 
    $$
    using the mean value theorem. Hence,
    $$
    \Li(T) -\Li(T^{\beta_1}) \gg \frac{T}{\log T} \min(1,(1- \beta_1) \log T) = \min(\frac{T}{\log T},T (1-\beta_1)).
    $$
    Now, using Stark's bound (e.g. \cite[Theorem 3.3]{TZ19}), we know that 
    $$
    1-\beta_1 \gg_m \frac{1}{Q(F)^{2} \log Q(F)}.
    $$
    The result follows by recalling that $Q(F) \ll_\epsilon T^\epsilon $ for any $\epsilon$.
\end{proof}

\begin{remark}
    \label[remark]{rk: quadraticCRT}

    We compare the above results with the classical Dirichlet $L$-functions. Let $G = \Z/2\Z$. In this case, the above construction yields $E_q = \Q(\sqrt{q*})$ where $q$ is a prime and 
    $$
    q* = (-1)^{\frac{q-1}{2}} q.
    $$
    Hence, if we let $\chi$ be the sign character, then $L(s,\chi) = L(s,\chi_{q_*})$ where $\chi_{q^*}(a) = \left( \frac{q*}{a} \right)_K$ is given by the Kronecker symbol. The local Frobenius condition at $p$ is precisely the same as whether $q^*$ is a residue or non-residue mod $p$ by the quadratic reciprocity. Using the Chinese Remainder Theorem, that's the same as some congruence classes mod $4 \prod p_i$. Hence, the density estimate can be given by the number of primes in an arithmetic progression. 
\end{remark}

\subsection{The finite Abelian case}

Now, we are able to finish the proof of \Cref{thm:shortEulerAb}.

\begin{proof}[Proof of \Cref{thm:shortEulerAb}]
    Since $G$ is finite Abelian, by the fundamental theorem of finite Abelian groups, we know that $G \cong \Z/m_1\Z \times \cdots \times \Z/m_r \Z$. Now, take $T>0$ and apply \Cref{prop:densityAb} to all $\Z/m_i \Z$. From this, we can obtain $E_{q_{i}^j}$ with $\Gal(E_{q_i^j}/\Q) \cong \Z/m_i \Z$ and $E_{q_i^j}$ only ramifies at $q_i^j$. Further, it also shows that if we consider the $q_i^j \le T$,  $(\frac{E_{q_i^j}/\Q}{p})=c_p$ and  unramified for $p \nmid m$, $p \le \sqrt{\log T}$,   we have $\gg_{G,\lambda} T^\lambda$ for each $i$ and $\lambda<1$. For $\eta |G|<1$, using the bound 
    $$
    \#\big\{E_{q_i^j}: {q_i^j} \le T, D_{E_{q_i^j}}<T^\eta\big\} \ll_G T^{|G|\eta},
    $$
    we may further restrict to these $D_{E_{q_i^j}} \ge T^\eta$ and we still have $\gg_{G,\lambda} T^\lambda$ many for each $i$ and $\lambda<1$.

    Now, for any $k \ge 1$, we can choose ${\textbf q}^k = (q_1^{u_1},...,q_r^{u_r})$ with $i k\le u_i\le(i+1) k$ and $q_i^{u_i}\ne q_j^{u_j}$ for all $i \ne j$. Clearly, there are still $\gg_{G,\lambda} T^\lambda$ many ${\textbf q}^k$ with $m_i<{\textbf q}^k_i \le T$ for all $1 \le i\le r$. We consider $E_k = E_{{\textbf q}^k_1}\cdots E_{{\textbf q}^k_r}$. Since $E_{{\textbf q}_i^k}$ ramifies at different primes, these $E_{{\textbf q}_i^k}$ are linearly disjoint which implies that $\Gal(E_k/\Q) \cong G$. 
    
    Moreover, we may bound the discriminant of $E_{{\textbf q}^k_i}$ from above. Since $E_{{\textbf q}^k_i} \subset \Q(\zeta_{{\textbf q}^k_i})$ and $\Q(\zeta_{{\textbf q}^k_i})$ only ramifies at ${{\textbf q}^k_i}$, $\nu_p(D_{E_{{\textbf q}^k_i}}) = 0$ for all $p\ne {{\textbf q}^k_i}$. But as the ${{\textbf q}^k_i}>m_i$, the ramification at ${{\textbf q}^k_i}$ is tame. Hence, we know that $D_{E_{{\textbf q}^k_i}} \mid ({{\textbf q}^k_i})^{m_i}$ and $D_{E_{{\textbf q}^k_i}} \le T^{m_i}$. Using the bound for the compositum, it follows that $D_{E_k} \le T^{A_G}$ for a fixed constant $A_G$. Hence, $\log D_{E_k} \asymp_G \log T$.
    
    Now, the result follows by the same averaging argument once we show that every $N <G$, $E_k^N$ varies. Indeed, under the identification of $\Gal(E_k/\Q)$ and $G \cong \Z/m_1\Z \times \cdots \times \Z/m_r \Z$, we know that $$I_{{\textbf q}_i^k} = \{0\} \times \Z/m_i \Z \times \{0\},$$ where $I_{{\textbf q}_i^k}$ denotes the inertia group at ${\textbf q}_i^k$. Hence, we know that for any $N< G$, if the image of $ \{0\} \times \Z/m_i \Z \times \{0\} \to G/N$ is nontrivial, $E_k^N$ is ramified at ${\textbf q}_i^k$. But since we have chosen ${\textbf q}^k$ such that all entries are increasing in $k$, we know from the ramification data that for any fixed $N < G$, $E_k^N$ are different for different $k$.

    Applying \Cref{prop:density} to all $E_k^N$, we know that for each $N$, there are only $O_{G,\epsilon}(T^{\epsilon A_G})$ exceptions and for the rest $k$'s, we have
    $$
    \log L(1,\chi) = \sum_{N(\p)\le (\log D_{E_k^N})} \log L_p(1,\chi) +O_{G,\epsilon}(1)
    $$
    for all non-trivial irreducible characters on $G/N$. Once applying this to all $N <G$, since we have $\gg_{\lambda} T^{\lambda}$ of such $k$'s and we only remove $O_{G,\epsilon}(T^{\epsilon A_G})$ many $k$'s for each $N<G$, there are still $\gg_{\lambda} T^{\lambda}$ many such $k$'s left if $\epsilon$ is chosen to be sufficiently small. Now, as in the earlier proofs,  
    \[
    \log q(\chi) \asymp_G \log D_{E_k^{N_\chi}} \asymp_G \log T
    \]
    for any irreducible $\chi$ on $G$. So, there is a $\delta$ such that $(\log q(\chi))^\delta \le \sqrt{\log T}$ for large $T$. The result follows as before by descending irreducible characters $\chi$ on $G$ to faithful irreducible characters on $G/N_\chi$, noticing $ \log q(\chi) \asymp_G \log  D_{E_k^N} \asymp_G \log T$, $\log L_p(1,\chi) = \chi(p)/p +O(p^{-2})$, and applying Mertens' formula. 
\end{proof}

\section{Proof of Theorems 1.2, 1.3, and 1.4}
\label{sec:ProofOfMainThms}
Now, we can easily finish the proof of \Cref{thm:main1}, \Cref{thm:main2}, and \Cref{thm:main3} from  \Cref{thm:shortEulerFaith}, \Cref{thm:shortEuler}, and \Cref{thm:shortEulerAb}, respectively. Since the proofs are essentially the same, we only illustrate the idea through the proof of \Cref{thm:main2}.

\begin{proof}[Proof of \Cref{thm:main2}]
    First, assume $\chi$ is a non-trivial irreducible character. Taking $[c_p] = [g]$ for all $p$, by \Cref{thm:shortEuler}, we know for such $\chi$, we have
    $$
    \log |L(1,\chi)| = \Re \sum_{p\le (\log q(\chi))^{\delta}}  \chi(p) p^{-1} +O_{G,\chi}(1)
    $$
    for some $\delta>0$. Further, there is a constant $\lambda=\lambda(G)$ such that for all $\lambda \le p \le (\log q(\chi))^{\delta}$, $p$ is unramified and we have $[\Frob_p] = [g]$.

    Hence, we know that
\begin{align*}
    \log |L(1,\chi)| &= \Re \sum_{\lambda \le p\le (\log q(\chi))^{\delta}}  \chi(\Frob_p) p^{-1} +O_{G,\chi}(1) \\
    &=\Re \chi(g) \sum_{\lambda \le p\le (\log q(\chi))^{\delta}} \frac 1 p +O_{G,\chi}(1) \\
    &=\Re \chi(g) \log \log \log  q(\chi) +O_{G,\chi}(1) 
\end{align*}
    by Mertens' formula. By exponentiation, we have
    \[
    |L(1,\chi)|  \asymp_{|G|,\chi(1)} (\log \log 3 q(\chi))^{\Re \chi(g)}.
    \]

    For general non-trivial character $\chi$, we write $$\chi = c_0 \1_G+\sum_{\psi \ne \1_G \text{ irred.}} c_\psi \psi.$$
    We may assume that one of the $c_\psi \ne 0$, as the theorem trivially follows otherwise. Now, we know that the above construction supplies an infinite family of fields such that
    $$
    |L(1,\psi)|  \asymp_{|G|,\psi(1)} (\log \log 3 q(\psi))^{\Re \psi(g)}
    $$
    for all nontrivial irreducible characters $\psi$.
    Hence,
    $$
    |\kappa(\chi)| = \Big|\prod_{\psi \ne \1_G \text{ irred.}} L(1,\psi)^{c_\psi}\Big | \asymp_{|G|,\chi(1)} \prod_{\psi \ne \1_G \text{ irred.}} (\log \log 3 q(\psi))^{c_\psi \Re \psi(g)}
    $$
    Now, as $\log q(\psi) \asymp_G \log T$  by \Cref{thm:shortEuler}, we know that 
    $$\log q(\chi) = \sum_{\psi \ne \1_G \text{ irred.}} c_\psi \log q(\psi) \asymp_{G,\chi(1)} \log T.$$
    Hence, we have
    $$
    |\kappa(\chi)|\asymp_{|G|,\chi(1)} (\log \log 3 q(\chi))^{\sum_{\psi \ne \1_G \text{ irred.}} c_\psi \Re \psi(g)} = (\log \log 3 q(\chi))^{\Re \chi(g)-\langle\chi,1 \rangle}.
    $$
\end{proof}

\section{One concrete example: $S_n$ for $n \geq 5$}
\label{sec:Example}
Lastly, we demonstrate how the Hilbert irreducibility method can be more concretely used in a particular cover that is generated by a polynomial, in this case $S_n$ for $n \geq 5$ for simplicity. This serves a few purposes. First, it shows a way to generalize \Cref{thm:main2} to other fields that are not covered. Second, we will show that the number of fields we obtain through a particular polynomial is much more than the ones given in \Cref{thm:main2} through the proof of \Cref{thm:shortEuler}. Third, we will briefly discuss the change of base field as brought up in \cite{cho_effective_2025}. 

The exact polynomial of this depends on the parity of $n$. To begin with, we start with $n$ odd. The main difficulty is to make sure all Galois subfields vary with $t_0$. To make this easy, the following polynomial is considered (see \Cref{remark:poly}).
$$
f(x,t) = x^n - x^{n-1} - t
$$

First, we need to show the following.

\begin{lemma}
    \label[lemma]{lm:regular}
    The splitting field generated by $f(x,t)$ over $\Q(t)$ is regular Galois with group $S_n$. Consequently, the corresponding cover $V \to \PP^1$ is a $G$-cover and geometrically irreducible. 
\end{lemma}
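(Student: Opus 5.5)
The plan is to compute the geometric monodromy group of the cover $x \mapsto t = x^n - x^{n-1}$ from $\PP^1_x$ to $\PP^1_t$, working over $\overline{\Q}$. Write $\Gamma = \Gal(\overline{\Q}E/\overline{\Q}(t))$ and $G = \Gal(E/\Q(t))$, where $E$ is the splitting field. We have $\Gamma \le G \le S_n$. It therefore suffices to show $\Gamma = S_n$: then $G = S_n$, and since $[E:\Q(t)] = |G| = |\Gamma| = [\overline{\Q}E:\overline{\Q}(t)]$, the constant field of $E$ is $\Q$, i.e.\ $E \cap \overline{\Q} = \Q$. Geometric irreducibility of the normalization $V$ then follows from regularity, as noted in the discussion preceding \Cref{thm:hilbert}.

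First, $f(x,t) = x^n - x^{n-1} - t$ is linear in $t$, so it is irreducible in $\overline{\Q}[x,t]$ and hence in $\overline{\Q}(t)[x]$ by Gauss. Thus $\Gamma$ is transitive on the $n$ roots.

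Next, locate the branch points of $\phi(x) = x^n - x^{n-1}$. We have $\phi'(x) = x^{n-2}(nx - (n-1))$, so the finite critical points are $x = 0$ and $x = (n-1)/n$.

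The fibre over $t = 0$ consists of $x = 0$ with multiplicity $n-1$ and $x = 1$ simple. Its inertia generator is therefore an $(n-1)$-cycle.

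Over $t_1 = \phi((n-1)/n) \neq 0$, the point $x = (n-1)/n$ is a simple critical point, since $\phi''$ does not vanish there. The fibre is therefore a double point plus $n-2$ simple points; the latter follows from Riemann--Hurwitz or directly because $\phi - t_1$ has only that one repeated root. The inertia generator is then a transposition.

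Over $t = \infty$ the point is totally ramified, giving an $n$-cycle. In characteristic zero, inertia groups are cyclic, generated by elements with cycle type given by the ramification indices in the fibre, and the monodromy group is generated by them.

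Finally, $\Gamma$ contains an $(n-1)$-cycle, so it is doubly transitive, hence primitive. A primitive group containing a transposition is $S_n$ (Jordan). Hence $\Gamma = S_n$.

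The only delicate point is the branch-cycle step: justifying that the cycle types of inertia are read off from the ramification indices in the fibres, and that $t_1$ has exactly one double point. This is a direct computation with $\phi$.

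Note that this argument does not actually use the parity of $n$. Parity presumably matters later in the paper for other reasons, such as controlling subfields like $\Q(\sqrt{\Disc})$. The discriminant $\Disc_x f = \pm t^{n-2}\bigl(n^n t + (n-1)^{n-1}\bigr)$ could serve as an alternative check on the branch locus $\{0, t_1, \infty\}$.
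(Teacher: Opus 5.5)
Your proposal is correct and is essentially the argument behind the paper's proof, which simply cites Serre's example on page 42 of \emph{Topics in Galois Theory} and records the ramification type $(n,n-1,2)$. You spell out what the citation leaves implicit: the branch-cycle computation, the double transitivity plus Jordan step giving geometric monodromy $S_n$, and the degree comparison showing that this forces $E\cap\overline{\Q}=\Q$.
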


\begin{proof}
    This is the example on page 42 in \cite{serre_topics_2016}, which has ramification type $(n,n-1,2)$.
\end{proof}

We let $F_{t_0}=\Q(\alpha_0)$  where $\alpha_0$ is a root of $f_{t_0}(x)=f(x,t_0)$ over $\Q$. Then $E_{t_0}$ is the Galois closure of $F_{t_0}$. Next, we will study the unique Galois subfield that is fixed by the unique normal subgroup in $A_n$.

\begin{lemma}
\label[lemma]{lm:quadSub}
    If $t_0$ satisfies Hilbert irreducibility, i.e. $\Gal(E_{t_0}/\Q) =S_n$, then $$E_{t_0}^{A_n} = \Q(\sqrt{\pm D_{F_{t_0}}}).$$
\end{lemma}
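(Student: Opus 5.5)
The plan is to identify $E_{t_0}^{A_n}$ as the field generated by the square root of the polynomial discriminant $\Disc(f_{t_0})$, and then compare $\Disc(f_{t_0})$ with $D_{F_{t_0}}$ up to squares. First, since $t_0$ satisfies Hilbert irreducibility, $\Gal(E_{t_0}/\Q)\cong S_n$ acting on the $n$ roots of $f_{t_0}$, and $f_{t_0}$ is irreducible of degree $n$. The fixed field of $A_n$ is the unique quadratic subfield of $E_{t_0}$. It equals $\Q(\sqrt{\Disc(f_{t_0})})$, because $\sqrt{\Disc(f_{t_0})}=\prod_{i<j}(\alpha_i-\alpha_j)$ is fixed exactly by even permutations. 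Since $n\ge 5$, $A_n$ is the unique index-$2$ subgroup, so no other quadratic subfield can occur.

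Second, I would relate $\Disc(f_{t_0})$ to $D_{F_{t_0}}$. The standard index formula gives
\[
\Disc(f_{t_0}) = [\OO_{F_{t_0}}:\Z[\alpha_0]]^2 \, \Disc(F_{t_0}).
\]
Hence $\Disc(f_{t_0})$ and $\Disc(F_{t_0})$ differ by a nonzero rational square, and they generate the same quadratic field. Writing $\Disc(F_{t_0}) = \pm D_{F_{t_0}}$, where the sign is $(-1)^{r_2}$ by Brill's theorem, gives $E_{t_0}^{A_n} = \Q(\sqrt{\pm D_{F_{t_0}}})$.

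One point needs care: $f_{t_0}$ must be monic with integer coefficients, so that $\alpha_0$ is an algebraic integer and $\Z[\alpha_0]\subseteq \OO_{F_{t_0}}$. This holds for $t_0\in\Z$, since $f(x,t)=x^n-x^{n-1}-t$ is monic in $x$. We also need the discriminant to be a non-square. That follows because the Galois group is $S_n$ rather than a subgroup of $A_n$, so the quadratic extension is genuine.

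I expect no real obstacle here. The only subtlety is bookkeeping the sign $\pm$ and noting that the polynomial discriminant is nonzero, which is guaranteed by separability under Hilbert irreducibility. For later use in the paper, I would also record the explicit formula $\Disc(x^n-x^{n-1}-t_0)=\pm t_0^{n-2}\bigl(n^n t_0+(n-1)^{n-1}\bigr)$, up to sign. This shows how the quadratic subfield varies with $t_0$, but it is not needed for the lemma itself.
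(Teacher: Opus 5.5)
Your proposal is correct and follows the paper's proof: you identify $E_{t_0}^{A_n}$ with $\Q(\sqrt{\Disc(f_{t_0})})$ via the Vandermonde product $\prod_{i<j}(\alpha_i-\alpha_j)$, then use $\Disc(f_{t_0}) = \pm D_{F_{t_0}}[\OO_{F_{t_0}}:\Z[\alpha_0]]^2$ to replace the polynomial discriminant by the field discriminant up to a square. Your extra remarks (monicity of $f_{t_0}$ so that $\Z[\alpha_0]\subseteq\OO_{F_{t_0}}$, non-squareness and nonvanishing of the discriminant, and the sign from Brill's theorem) only spell out details the paper leaves implicit.
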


\begin{proof}
    Let $\alpha_i$ be the roots of $f_{t_0}$ in $\overline{\Q}$. Clearly, we know that $\sqrt{\Disc(f)} = \prod_{i<j}(\alpha_i-\alpha_j)$ is stable under $A_n$ action but not under $S_n$ action. Hence, it follows that $E_{t_0}^{A_n} = \Q(\sqrt{\Disc(f)})$. Now, since we have
\begin{equation}
\label{eqn:discComp}
    \pm D_{F_{t_0}} [\OO_{F_{t_0}}:\Z[\alpha_0]]^2 =  \Disc(f),
\end{equation}
    the result follows.
\end{proof}

 Then we need to give precise control of the discriminant $D_{F_{t_0}}$.

\begin{lemma}
\label[lemma]{lm:disc2}
    If $t_0$ is a square free integer, then  for all $p\nmid n-1$ and $p \mid t_0$, we have $p^{n-2} \parallel D_{F_{t_0}}$.
\end{lemma}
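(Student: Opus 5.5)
Fix a prime $p$ with $p \mid t_0$ and $p \nmid n-1$; since $t_0$ is squarefree, $p \parallel t_0$. The plan is to factor $f_{t_0}(x) = x^{n-1}(x-1) - t_0$ over $\Q_p$ and compute the $p$-part of $D_{F_{t_0}}$ locally, as the product of the discriminants of the local étale algebras $\Q_p[x]/(g_i)$ attached to the irreducible factors $g_i$ of $f_{t_0}$ over $\Q_p$.

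First I will reduce modulo $p$. This gives $\bar f = x^{n-1}(x-1)$. The root $x=1$ is simple, so Hensel's lemma splits off a linear factor $x-\beta$ with $\beta \equiv 1 \bmod p$, and $f_{t_0} = (x-\beta)\,g(x)$ with $g \equiv x^{n-1} \bmod p$.

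Next I will show that $g$ is Eisenstein. Its constant term is $g(0) = f_{t_0}(0)/(-\beta) = t_0/\beta$, which has $p$-adic valuation exactly $1$ because $\beta$ is a unit. So $g$ defines a totally ramified extension $L/\Q_p$ of degree $n-1$. The linear factor contributes an unramified piece with trivial discriminant, so $v_p(D_{F_{t_0}}) = v_p(\mathrm{disc}(L/\Q_p))$. The same Eisenstein argument also confirms that $p$ does not divide the index $[\OO_{F_{t_0}}:\Z[\alpha_0]]$ locally, since both factors give monogenic local rings and their resultant is a unit (they are coprime mod $p$). This is consistent with \eqref{eqn:discComp}.

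Then I will compute the discriminant of $L$. Since $p \nmid n-1 = e$, the extension $L/\Q_p$ is tamely ramified, so its different has exponent $e-1 = n-2$. Hence $v_p(\mathrm{disc}(L/\Q_p)) = f(e-1) = n-2$, which gives $p^{n-2} \parallel D_{F_{t_0}}$. As a cross-check, I can compute directly: $\Disc(f_{t_0}) = \pm t_0^{n-2}\bigl(n^n t_0 + (n-1)^{n-1}\bigr)$ up to sign conventions. Its $p$-valuation is $n-2$ exactly when $p \nmid (n-1)$, and then the local index is $1$.

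The main obstacle is making the local-to-global step rigorous, namely that $v_p(D_{F})$ equals the sum of the local discriminant valuations over the factors of $f_{t_0}$ in $\Q_p[x]$. This is standard, using $F\otimes\Q_p \cong \prod \Q_p[x]/(g_i)$. After that, the only delicate point is the tameness, which is exactly where the hypothesis $p\nmid n-1$ is used.
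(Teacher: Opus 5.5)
Your proof is correct. It arrives at the same ramification data and the same final formula as the paper: $p\OO_{F_{t_0}}=\fP_1^{n-1}\fP_2$ with $\fP_1$ tame, so $v_p(D_{F_{t_0}})=\sum f_i(e_i-1)=n-2$. What differs is how the splitting of $p$ is established.

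The paper argues globally. It first uses Dedekind's criterion (from Cohen's book) to check that $\Z[\alpha_0]$ is $p$-maximal. That check is what allows it to read the factorization of $p$ off $\bar f = x^{n-1}(x-1)$ via Dedekind--Kummer.

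You argue in $\Z_p[x]$ instead. Hensel's lemma splits $f_{t_0}=(x-\beta)g$, and squarefreeness of $t_0$ makes $g$ Eisenstein. This gives at once the totally ramified factor $L$ and its ring of integers $\Z_p[\gamma]$. So $p$-maximality of $\Z[\alpha_0]$ is never an input for you; it comes out afterwards from the comaximality of $x-\beta$ and $g$.

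The paper's version is shorter given the two cited criteria, and it yields the global factorization of $p$ directly. Yours is self-contained and shows clearly where each hypothesis enters: squarefreeness gives the Eisenstein property, and $p\nmid n-1$ gives tameness. Your formula $\Disc(f_{t_0})=\pm t_0^{n-2}\bigl(n^n t_0+(n-1)^{n-1}\bigr)$ is also a correct and useful independent check that the index $[\OO_{F_{t_0}}:\Z[\alpha_0]]$ is prime to $p$.
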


\begin{proof}
    A direct application of Cohen \cite[Theorem 6.1.4.b ]{Cohen1993} shows that the order $\Z[\alpha_0]$ is $p$-maximal for any $p \mid t_0$. Hence, we are in a position to apply the Dedekind-Kummer theorem (e.g. \cite[Lemma 22]{Swinnerton-Dyer_2001}) to read off the ramification data of $p$ from the reduction of $f \mod p$. Since we know that
    $$
    f(x,t_0) = x^{n-1}(x-1) \mod p,
    $$
    we have $p \OO_{F_{t_0}} = \fP_1^{n-1} \fP_2$.  Now, since $p\nmid n-1$, $p$ is tame. Hence,
    $$
    p^{n-2} = p^{\sum f_i(e_i-1)} \parallel D_{F_{t_0}}
    $$
    by applying Theorem 21 in Swinnerton-Dyer \cite{Swinnerton-Dyer_2001}.
\end{proof}

Now, we can prove \Cref{thm:shortEuler} for $G=S_n$, $n$ odd with the particular cover generated by $f(x,t) = x^n - x^{n-1} - t$.

\begin{proof}[Proof of \Cref{thm:shortEuler} when $G=S_n$ for odd $n \geq 5$]
    We fix conjugacy classes $[c_p]$ in $G$ as in the theorem. The first part of the proof is similar to the general case. We apply \Cref{prop:serre} with $p_i \le  (\log T)^{1/2}$ and $[c_{p_i}]=[c_p]$. Then, we know there is a constant $\lambda=\max (\lambda(V\to \PP^1),n+1)$ together with $t_p$ such that if $t_0 \equiv t_p \mod p$, and if we specialize to $E_{t_0}$, we have $[\Frob_p] = [c_p]$ for all $\lambda \le p \le  (\log T))^{1/2}$ and $p$ is unramified. We  also define the similar congruence class $a \mod M$ by
    $$
    M=\prod_{p\le (\log T))^{1/2}} p ,\text{and } a \equiv t_p \mod p \text{ for } p> \lambda \text{ and } a \equiv 1 \mod p \text{ for } p\le \lambda
    $$
    and $M = \exp(O((\log T)^{1/2}))$ by the prime number theorem.

    But now, instead of using \Cref{thm:debes}, we can utilize the property of the polynomial to separate the distinct fields generated through specialization of the cover. For any $T$ sufficiently large, we consider all $t_0 \le T$ square-free, with $t_0 \equiv a \mod M$. Using \Cref{thm:cohen}, there are at most $O(T^{1/2} \log T)$ exceptions with a specialized Galois group that differs from $S_n$. Further, we have $t_0 \mid D_F$ by \Cref{lm:disc2}. Using Malher's bound for the discriminant(e.g. \cite{Mah64}) and \Cref{eqn:discComp}, it follows that $D_{F_{t_0}} \ll_n t_0^{2n(n-1)} \ll_n T^{2n(n-1)}$. Since $d(N)\ll_\epsilon N^\epsilon$ (e.g. \cite[2.20]{Montgomery_Vaughan_2006}), we know that there is a $1$-to-$O_n(T^\epsilon)$ relation between $t_0$ and $D_{F_{t_0}}$. Moreover, for each $K/\Q$ extension with $\Gal(K/\Q)=S_n$, there are at most $2^{n!}$ subfields. Hence, the number of distinct $t_0$ square-free that give distinct $S_n$-fields $E_{t_0}$ is bounded below by $\gg_{n,\epsilon} N(T) T^{-\epsilon}$, where $N(T)$ is the number of square-free integers $t_0$ less than $T$ that are congruent to $a(T) \mod M(T)$ and $p\nmid t_0$ for all $p \mid n$. Now, using the standard estimate for $N(T)$, we know that $N(T) \gg_\epsilon T^{1-\epsilon}$. Hence,  there are at least $\gg_{n,\epsilon} T^{1-\epsilon}$ distinct square-free $t_0$ that give us distinct distinct $S_n$-fields $E_{t_0}$. 

    As in the proof of \Cref{thm:shortEuler}, the proof now consists of two steps to finish. In \textit{Step 1.}, we apply the density result \Cref{prop:density} to faithful irreducible characters, and in \textit{Step 2.}, we iterate the application of \Cref{prop:density} to the quotient of $S_n$ by $A_n$ to deal with the sign character.

    \textit{Step 1.} For these fields $E_{t_0}$, we know that $D_{t_0}\ll_n D_{F_{t_0}}^{n!}$, where we use $D_{t_0} = D_{E_{t_0}}$ to simplify notation. Hence, we have $D_{t_0}\ll_n t_0^{2n(n-1)n!} \ll_n T^{2n(n-1)n!}$. Apply \Cref{prop:density} to the extension $E_{t_0}/\Q$ with any $\epsilon<1/2n(n-1)n!$, and $\chi$ any non-trivial irreducible character different from the sign character; it follows that
    $$
    \log L(1,\chi) =  \sum_{p\le \log D_{t_0}} \log L_p(1,\chi) +O_{G,\epsilon}(1)
    $$
    for all $t_0$ as above with $O_{|G|,\epsilon}(T^{\epsilon 2n(n-1)n!})$ exceptions.
    Hence, we still have $\gg_{n,\epsilon} T^{1-\epsilon}$ many $t_0$ left.

    For these representations,  they are faithful. Hence, \cite[Lemma 4.2]{cho_effective_2025} and the conductor-discriminant formula show that $\log q(\chi) \asymp_{n} \log D_{t_0}$. Hence, we may choose any $\delta<\frac{1}{2}$ and enlarge $\lambda$ so that $(\log q(\chi))^\delta\le (\log T)^{1/2}$ for any $(\log q(\chi))^\delta \ge \lambda$. 

    Now, using $\log L_p(1,\chi) = \chi(p) p^{-1}+O(p^{-2})$, we see that
    $$
    \log L(1,\chi) =\sum_{p\le \log D_{t_0}} \chi(p) p^{-1} + O_{G,\chi,\epsilon}(1)
    $$
     Hence, by Mertens' formula and $\log q(\chi) \asymp_n \log D_{t_0}$ again, we obtain the desired estimates for any character different from $\chi_\text{sgn}$.

     \textit{Step 2.} Now, let's deal with the case of $\chi_\text{sgn}$ by further thinning the set of $t_0$'s. Let $\rho_\text{sgn}$ denote the sign representation. We know that $\rho_\text{sgn}:S_n \to \C^\times$ factorizes through the quotient $S_n/A_n = \Z/2\Z$ and we know that $L(1,\chi_\text{sgn}) = L(1,\chi)$ where $\chi$ denotes the unique non-trivial representation of the quadratic subfield $L:=E_{t_0}^{A_n} = \Q(\sqrt{\pm D_{F_{t_0}}})$ by \Cref{lm:quadSub}. Hence, we iterate the application of \Cref{prop:density} to the extension $L/\Q$ as before. To do so, we must first show that this field varies with $t_0$ again.

     We begin by estimating the size of $D_L$. By \Cref{lm:disc2}, \Cref{lm:quadSub}, the fact that $n$ is odd and $p \nmid t_0$ for all $p < n$, we know that for all $p \mid t_0$, $p$ is ramified in $L$. Hence, we know $t_0 \mid D_L$. Using the relative discriminant formula, we know that $D_L \ll_n t_0^{2n(n-1)n!} \ll_n T^{2n(n-1)n!}$. Thus, $\log D_L \asymp_n \log D_{t_0}$. Since $\chi_\text{sgn}$ is faithful in the quotient, we once again know that $\log q(\chi) \asymp_n \log D_L \asymp_n \log D_{t_0}$. Hence, by increasing $\lambda$ if needed, we can make all $\chi$ irreducible characters such that $(\log q(\chi))^\delta \le (\log T)^{1/2}$ for  any $(\log q(\chi))^\delta \ge \lambda$.   
     
     Now, arguing as above, we still have the  $1$-to-$O_n(T^\epsilon)$ relation between $t_0$ and $D_{L}$. By a similar application of \Cref{prop:density} as above, 
    \begin{align*}
        \log L(1,\chi) &= \sum_{p\le \log D_L} \log L_p(1,\chi) + O_{G,\chi,\epsilon}(1) \\
        &=\sum_{p\le \log D_L} \chi(p) p^{-1} + O_{G,\chi,\epsilon}(1)
    \end{align*}
    for $\gg_{n,\epsilon} T^{1-\epsilon}$ number of $t_0$. 
    Hence, the estimate follows by Mertens' formula and $\log q(\chi) \asymp_n \log D_{L}$. The result follows once we choose $\eta n! <1$ and remove all $t_0$ such that $D_{t_0} \le T^\eta$ and $D_L \le T^\eta$ to ensure $\log q(\chi) \asymp_n \log T$ for all non-trivial irreducible characters $\chi$.
\end{proof}

The case for even $n \geq 6$ can be proven in a similar way. We just need to use the following polynomial instead.
$$
f(x,t) = (x-t)(x-2^2t)(...)(x-n^2t)-t
$$
The regular Galois result is shown in \cite[Proposition 2]{Duke03}, replacing \Cref{lm:regular}. For square-free $t_0$, we observe that $f_{t_0}$ is $p$-Eisenstein for all $p \mid t_0$ and therefore $p$ are totally ramified. This observation replaces \Cref{lm:disc2}.

\begin{remark}
    The application of \Cref{prop:density} in this proof can be replaced by the zero-density result of Lemke Oliver, Thorner and Zaman. Indeed, one can use \cite[Lemma 3.2, Theorem 3.1, and Proposition 10.3]{LOTZ24} in place of \Cref{prop:density}.
\end{remark}

\begin{remark}
    \label[remark]{rk: boundNotTight}
    We note that the bound given by \Cref{thm:debes} is not sharp at all in this example. By controlling the ramification of the total field and the subfield carefully, this method produces $\gg_\epsilon T^{1-\epsilon}$ many fields through specialization, whereas the general method only produces $\gg_\epsilon T^{1/2-1/n!-\epsilon}$ many fields. 
\end{remark}

\begin{remark}
    For other $G$ that is regular Galois over $\Q$, if one can obtain a good control on the number of distinct fields generated (likely through ramification similar to above), the condition $$\sum_{N \triangleleft G, N \ne G} |N| <|G|$$ becomes superfluous.
\end{remark}

\begin{remark}
    We would also like to address the change of base field as brought up in \cite[Section 2.3]{cho_effective_2025}. Taking the same example as in their paper, we consider an $S_3$ field $K/\Q$ with unique quadratic field $F$ and let $\psi$ be any non-trivial character in $\Gal(K/F)$ and $\chi$ be the character for the standard representation in $S_3$. If we denote $G=\Gal(K/\Q)$ and $N = \Gal(K/F)$, as observed by \cite{cho_effective_2025}, we have $\chi = \mathrm{Ind}_{N}^{G} \psi$. Hence, we have
    $$
    \kappa(\chi) = L(1,\chi) = L(1,\psi) = \kappa(\psi).
    $$
    Thus, we obtain two different GRH bounds. Using the fact that $\chi, \psi$ are faithful, we really have
    \begin{equation}
        \label{eq: boundOverQ}
        \frac{1}{\log \log D_K} \ll L(1,\chi) \ll (\log \log D_K)^2
    \end{equation}
    when the base field is $\Q$ and
    \begin{equation}
        \label{eq: boundOverL}
        \frac{1}{(\log \log D_K)^{1/2}(\log \log D_F)^{1/2}} \ll L(1,\psi) \ll (\log \log D_K) (\log \log D_F)
    \end{equation}
    if the base field is considered to be $F$. Although the bound (\ref{eq: boundOverL}) seems to be strictly stronger than (\ref{eq: boundOverQ}), this improvement disappears for families attaining the true extremal value. In fact, if we attain the extreme value by forcing all Frobenius elements in $K$ to land in the same class, then the discriminant of $F$ is necessarily large.  For such families, under GRH, we always have $\log \log D_K \asymp \log \log D_F$, and this is true for any normal tower $K/F/\Q$ by the GRH Chebotarev density theorem. Hence, the extreme values are the same for induced characters when changing the base field.
\end{remark}

\begin{remark}
    \label[remark]{rk:attainLower}
    One natural question to ask is whether there is a family that attains (\ref{eq: boundOverL}) but not (\ref{eq: boundOverQ}). The answer is positive. For this, we will fix $F=\Q(\zeta_3)$ and use the Kummer extension $K_t=F(\sqrt[3]{t})$ for cubic free $t$'s. Clearly, we have $\Gal(K_t/\Q)=S_3$ and $K_t^{A_3} = F$ because $K_t$ is the Galois closure of the field $\Q(\sqrt[3]{t})$. For these $K_t$'s, we can consider them as specializations of the Galois cover over $\PP^1_F$ generated by $x^3-t$. Note that this extension is regular, since both its geometric and arithmetic monodromy groups are isomorphic to $\Z/3\Z$. Hence, we can apply Serre's specialization (c.f. \Cref{prop:serre}) to this extension for one of the prime ideals $\p \subset \OO_{\Q(\zeta_3)}$ lying over $p$ where $p$ splits in $\Q(\zeta_3)/\Q$. It follows that the upper and lower bounds of (\ref{eq: boundOverL}) are attained using the zero density estimate in \cite{LOTZ24}, Serre's specialization, and cubic-free integer density. On the other hand, because $F$ is fixed, we know that the bounds in (\ref{eq: boundOverQ}) are strictly weaker.
\end{remark}

\section*{Acknowledgements}
\noindent
This project was organized and partly funded by the Department of Mathematics at the University of Toronto as part of the 2026 Summer Undergraduate Student Research Awards.  The first author thanks Peter J. Cho and Robert Lemke Oliver for their encouragement. The second author thanks Ruizhen Liu for proofreading part of an earlier version of the manuscript.  AZ was partially supported by NSERC grant RGPIN-2022-04982. 

\section*{AI Declaration}
\noindent
No text in the manuscript is written by AI. ChatGPT was used to search the literature and proofread the manuscript. GPT-6 Astra produced the example in \Cref{rk:attainLower}.

\bibliographystyle{alpha}
\bibliography{references}

@book {Montgomery_Vaughan_2006, 
    place={Cambridge}, 
    series={Cambridge Studies in Advanced Mathematics}, 
    title={Multiplicative Number Theory I: Classical Theory}, publisher={Cambridge University Press}, 
    author={H. L. Montgomery and R. C. Vaughan}, 
    year={2006}, 
    collection={Cambridge Studies in Advanced Mathematics}
}

@Inbook {Cohen1993,
    author="Cohen, Henri",
    title="Algorithms for Algebraic Number Theory II",
    bookTitle="A Course in Computational Algebraic Number Theory",
    year="1993",
    publisher="Springer Berlin Heidelberg",
    address="Berlin, Heidelberg",
    pages="297--359",
    isbn="978-3-662-02945-9",
    doi="10.1007/978-3-662-02945-9_6",
    url="https://doi.org/10.1007/978-3-662-02945-9_6"
}

@book {Swinnerton-Dyer_2001, 
    place={Cambridge}, 
    series={London Mathematical Society Student Texts}, 
    title={A Brief Guide to Algebraic Number Theory}, 
    publisher={Cambridge University Press}, 
    author={Swinnerton-Dyer, H. P. F.}, 
    year={2001}, 
    collection={London Mathematical Society Student Texts}}

@book {serre_topics_2016,
	title = {Topics in Galois Theory},
	isbn = {9780429064401},
	url = {https://www.taylorfrancis.com/books/9781439865255},
	publisher = {A K Peters/{CRC} Press},
	author = {Serre, Jean-Pierre},
	urldate = {2026-07-04},
    year = {2016},
	date = {2016-04-19},
	langid = {english},
	doi = {10.1201/b10588},
}

@article {Duke03,
    AUTHOR = {Duke, W.},
     TITLE = {Extreme values of {A}rtin {$L$}-functions and class numbers},
   JOURNAL = {Compositio Math.},
  FJOURNAL = {Compositio Mathematica},
    VOLUME = {136},
      YEAR = {2003},
    NUMBER = {1},
     PAGES = {103--115},
      ISSN = {0010-437X,1570-5846},
   MRCLASS = {11R42 (11R29)},
  MRNUMBER = {1966783},
MRREVIEWER = {V.\ Kumar\ Murty},
       DOI = {10.1023/A:1022695505997},
       URL = {https://doi.org/10.1023/A:1022695505997},
}

@article {Daileda-2006,
    AUTHOR = {Daileda, Ryan C.},
     TITLE = {Non-abelian number fields with very large class numbers},
   JOURNAL = {Acta Arith.},
  FJOURNAL = {Acta Arithmetica},
    VOLUME = {125},
      YEAR = {2006},
    NUMBER = {3},
     PAGES = {215--255},
      ISSN = {0065-1036,1730-6264},
   MRCLASS = {11R29 (11F12 11M26 11R47)},
  MRNUMBER = {2276192},
MRREVIEWER = {Siman\ Wong},
       DOI = {10.4064/aa125-3-2},
       URL = {https://doi-org.myaccess.library.utoronto.ca/10.4064/aa125-3-2},
}

@article {LOTZ24,
    AUTHOR = {Lemke Oliver, Robert J. and Thorner, Jesse and Zaman, Asif},
     TITLE = {An approximate form of {A}rtin's holomorphy conjecture and
              non-vanishing of {A}rtin {$L$}-functions},
   JOURNAL = {Invent. Math.},
  FJOURNAL = {Inventiones Mathematicae},
    VOLUME = {235},
      YEAR = {2024},
    NUMBER = {3},
     PAGES = {893--971},
      ISSN = {0020-9910,1432-1297},
   MRCLASS = {11R42 (11R45)},
  MRNUMBER = {4701881},
MRREVIEWER = {Christopher\ Frei},
       DOI = {10.1007/s00222-023-01232-2},
       URL = {https://doi.org/10.1007/s00222-023-01232-2},
}

@article {MW77,
    AUTHOR = {Montgomery, Hugh L. and Weinberger, Peter J.},
     TITLE = {Real quadratic fields with large class number},
   JOURNAL = {Math. Ann.},
  FJOURNAL = {Mathematische Annalen},
    VOLUME = {225},
      YEAR = {1977},
    NUMBER = {2},
     PAGES = {173--176},
      ISSN = {0025-5831,1432-1807},
   MRCLASS = {12A50 (10H10)},
  MRNUMBER = {427271},
MRREVIEWER = {Matti\ Jutila},
       DOI = {10.1007/BF01351721},
       URL = {https://doi.org/10.1007/BF01351721},
}

@misc {cho_effective_2025,
      title="{E}ffective {Brauer-Siegel} theorems for {A}rtin {$L$-functions}",
      author={Cho, Peter J.  and Lemke Oliver, Robert J.  and Zaman, Asif },
      year={2026},
      eprint={2510.02309},
      archivePrefix={arXiv},
      primaryClass={math.NT},
      url={https://arxiv.org/abs/2510.02309}, 
      note={preprint, arXiv:2510.02309},
}

@article {Coh81,
    AUTHOR = {Cohen, S. D.},
     TITLE = {The distribution of {G}alois groups and {H}ilbert's
              irreducibility theorem},
   JOURNAL = {Proc. London Math. Soc. (3)},
  FJOURNAL = {Proceedings of the London Mathematical Society. Third Series},
    VOLUME = {43},
      YEAR = {1981},
    NUMBER = {2},
     PAGES = {227--250},
      ISSN = {0024-6115,1460-244X},
   MRCLASS = {12A20 (12A75)},
  MRNUMBER = {628276},
MRREVIEWER = {D.\ J.\ Lewis},
       DOI = {10.1112/plms/s3-43.2.227},
       URL = {https://doi.org/10.1112/plms/s3-43.2.227},
}

@article {Mah64,
    AUTHOR = {Mahler, K.},
     TITLE = {An inequality for the discriminant of a polynomial},
   JOURNAL = {Michigan Math. J.},
  FJOURNAL = {Michigan Mathematical Journal},
    VOLUME = {11},
      YEAR = {1964},
     PAGES = {257--262},
      ISSN = {0026-2285,1945-2365},
   MRCLASS = {12.30 (10.79)},
  MRNUMBER = {166188},
MRREVIEWER = {L.\ Mirsky},
       URL = {http://projecteuclid.org/euclid.mmj/1028999140},
}

@article {Lit28,
    AUTHOR = {Littlewood, J. E.},
     TITLE = {On the {C}lass-{N}umber of the {C}orpus {$P({\surd}-k)$}},
   JOURNAL = {Proc. London Math. Soc. (2)},
  FJOURNAL = {Proceedings of the London Mathematical Society. Second Series},
    VOLUME = {27},
      YEAR = {1928},
    NUMBER = {5},
     PAGES = {358--372},
      ISSN = {0024-6115},
   MRCLASS = {99-04},
  MRNUMBER = {1575396},
       DOI = {10.1112/plms/s2-27.1.358},
       URL = {https://doi.org/10.1112/plms/s2-27.1.358},
}

@misc {oliver_faithful_2024,
      title={{F}aithful {A}rtin induction and the {C}hebotarev density theorem}, 
      author={Lemke Oliver, Robert J.  and Smith, Alexander},
      year={2024},
      eprint={2405.08383},
      archivePrefix={arXiv},
      primaryClass={math.NT},
      url={https://arxiv.org/abs/2405.08383}, 
      note={preprint, arXiv:2405.08383},
}

@article {De17,
    AUTHOR = {D{\'e}bes, Pierre},
     TITLE = {On the {M}alle conjecture and the self-twisted cover},
   JOURNAL = {Israel J. Math.},
  FJOURNAL = {Israel Journal of Mathematics},
    VOLUME = {218},
      YEAR = {2017},
    NUMBER = {1},
     PAGES = {101--131},
      ISSN = {0021-2172,1565-8511},
   MRCLASS = {12F10 (12F12)},
  MRNUMBER = {3625127},
MRREVIEWER = {Chr.\ U.\ Jensen},
       DOI = {10.1007/s11856-017-1461-7},
       URL = {https://doi.org/10.1007/s11856-017-1461-7},
}

@article {granville_distribution_2003,
	 AUTHOR = {Granville, A. and Soundararajan, K.},
     TITLE = {The distribution of values of {$L(1,\chi_d)$}},
   JOURNAL = {Geom. Funct. Anal.},
  FJOURNAL = {Geometric and Functional Analysis},
    VOLUME = {13},
      YEAR = {2003},
    NUMBER = {5},
     PAGES = {992--1028},
      ISSN = {1016-443X,1420-8970},
   MRCLASS = {11M20},
  MRNUMBER = {2024414},
MRREVIEWER = {Philippe\ G.\ Michel},
       DOI = {10.1007/s00039-003-0438-3},
       URL = {https://doi.org/10.1007/s00039-003-0438-3},
}

@article {Lamzouri-2015,
    AUTHOR = {Lamzouri, Youness},
     TITLE = {Extreme values of class numbers of real quadratic fields},
   JOURNAL = {Int. Math. Res. Not. IMRN},
  FJOURNAL = {International Mathematics Research Notices. IMRN},
      YEAR = {2015},
    NUMBER = {22},
     PAGES = {11847--11860},
      ISSN = {1073-7928,1687-0247},
   MRCLASS = {11R29 (11R11)},
  MRNUMBER = {3456705},
MRREVIEWER = {Miho\ Aoki},
       DOI = {10.1093/imrn/rnv054},
       URL = {https://doi-org.myaccess.library.utoronto.ca/10.1093/imrn/rnv054},
}

@article {Lamzouri-2010,
    AUTHOR = {Lamzouri, Youness},
     TITLE = {Distribution of values of {$L$}-functions at the edge of the
              critical strip},
   JOURNAL = {Proc. Lond. Math. Soc. (3)},
  FJOURNAL = {Proceedings of the London Mathematical Society. Third Series},
    VOLUME = {100},
      YEAR = {2010},
    NUMBER = {3},
     PAGES = {835--863},
      ISSN = {0024-6115,1460-244X},
   MRCLASS = {11M06 (11N37)},
  MRNUMBER = {2640292},
MRREVIEWER = {Antanas\ Laurin\v cikas},
       DOI = {10.1112/plms/pdp050},
       URL = {https://doi-org.myaccess.library.utoronto.ca/10.1112/plms/pdp050},
}

@article {TZ19,
    AUTHOR = {Thorner, Jesse and Zaman, Asif},
     TITLE = {A unified and improved {C}hebotarev density theorem},
   JOURNAL = {Algebra Number Theory},
  FJOURNAL = {Algebra \& Number Theory},
    VOLUME = {13},
      YEAR = {2019},
    NUMBER = {5},
     PAGES = {1039--1068},
      ISSN = {1937-0652,1944-7833},
   MRCLASS = {11R44},
  MRNUMBER = {3981313},
MRREVIEWER = {Mario\ Pineda-Ruelas},
       DOI = {10.2140/ant.2019.13.1039},
       URL = {https://doi.org/10.2140/ant.2019.13.1039},
}

@article {ChoKim-2017,
    AUTHOR = {Cho, Peter J. and Kim, Henry H.},
     TITLE = {Extreme residues of {D}edekind zeta functions},
   JOURNAL = {Math. Proc. Cambridge Philos. Soc.},
  FJOURNAL = {Mathematical Proceedings of the Cambridge Philosophical
              Society},
    VOLUME = {163},
      YEAR = {2017},
    NUMBER = {2},
     PAGES = {369--380},
      ISSN = {0305-0041,1469-8064},
   MRCLASS = {11R42 (11M41)},
  MRNUMBER = {3682635},
MRREVIEWER = {Ayyadurai\ Sankaranarayanan},
       DOI = {10.1017/S0305004117000019},
       URL = {https://doi-org.myaccess.library.utoronto.ca/10.1017/S0305004117000019},
}

@article {ChoKim-2026,
    AUTHOR = {Cho, Peter J. and Kim, Gyeongseok},
     TITLE = {On the residues and {E}uler-{K}ronecker constants of cyclic
              number fields},
   JOURNAL = {Ramanujan J.},
  FJOURNAL = {Ramanujan Journal. An International Journal Devoted to the
              Areas of Mathematics Influenced by Ramanujan},
    VOLUME = {69},
      YEAR = {2026},
    NUMBER = {3},
     PAGES = {Paper No. 58, 32},
      ISSN = {1382-4090,1572-9303},
   MRCLASS = {11R42 (11M06)},
  MRNUMBER = {5031651},
       DOI = {10.1007/s11139-026-01331-7},
       URL = {https://doi-org.myaccess.library.utoronto.ca/10.1007/s11139-026-01331-7},
}

@article {Cho-2014,
    AUTHOR = {Cho, Peter Jaehyun},
     TITLE = {The strong {A}rtin conjecture and large class numbers},
   JOURNAL = {Q. J. Math.},
  FJOURNAL = {The Quarterly Journal of Mathematics},
    VOLUME = {65},
      YEAR = {2014},
    NUMBER = {1},
     PAGES = {101--111},
      ISSN = {0033-5606,1464-3847},
   MRCLASS = {11R29 (11M26 11R80)},
  MRNUMBER = {3179652},
MRREVIEWER = {Timothy\ All},
       DOI = {10.1093/qmath/has046},
       URL = {https://doi-org.myaccess.library.utoronto.ca/10.1093/qmath/has046},
}

@misc{wittenberg25,
      title={{P}ark {C}ity lecture notes: around the inverse {G}alois problem}, 
      author={Olivier Wittenberg},
      year={2025},
      eprint={2302.13719},
      archivePrefix={arXiv},
      primaryClass={math.NT},
      url={https://arxiv.org/abs/2302.13719}, 
      note={preprint, arXiv: 2302.13719}
}

@book {MalleMatzat,
    AUTHOR = {Malle, Gunter and Matzat, B. Heinrich},
     TITLE = {Inverse {G}alois theory},
    SERIES = {Springer Monographs in Mathematics},
 PUBLISHER = {Springer-Verlag, Berlin},
      YEAR = {1999},
     PAGES = {xvi+436},
      ISBN = {3-540-62890-8},
   MRCLASS = {12F12 (11R32 12F10 14G32 20C33 20F36 20G40)},
  MRNUMBER = {1711577},
MRREVIEWER = {Helmut\ V\"olklein},
       DOI = {10.1007/978-3-662-12123-8},
       URL = {https://doi.org/10.1007/978-3-662-12123-8},
}

@misc{Huang26,
      title={The {M}athieu group {$M_{23}$} is a {G}alois group over $\mathbb{Q}$}, 
      author={Xiaoyu Huang and Blake Jackson and Kyu-Hwan Lee and Bjorn Poonen and Rachel Pries and Shaowu Zhang},
      year={2026},
      eprint={2608.08538},
      archivePrefix={arXiv},
      primaryClass={math.NT},
      url={https://arxiv.org/abs/2608.08538},
      note={preprint, arXiv:2608.08538}
}

@book {Murty-Murty-1997,
    AUTHOR = {Murty, M. Ram and Murty, V. Kumar},
     TITLE = {Non-vanishing of {$L$}-functions and applications},
    SERIES = {Progress in Mathematics},
    VOLUME = {157},
 PUBLISHER = {Birkh\"auser Verlag, Basel},
      YEAR = {1997},
     PAGES = {xii+196},
      ISBN = {3-7643-5801-7},
   MRCLASS = {11Mxx (11F67 11G40 11R42)},
  MRNUMBER = {1482805},
MRREVIEWER = {D.\ R.\ Heath-Brown},
       DOI = {10.1007/978-3-0348-0274-1},
       URL = {https://doi-org.myaccess.library.utoronto.ca/10.1007/978-3-0348-0274-1},
}
\parindent0pt

\end{document}